\documentclass[twoside,11pt]{amsart}
\usepackage{amsfonts}
\usepackage{amsmath}
\usepackage{amscd}
\usepackage{amssymb}
\usepackage{hyperref}
\usepackage{geometry}
\usepackage{graphicx}
\usepackage{inputenc}
\usepackage{enumerate}

\newtheorem{theorem}{Theorem}

\newtheorem{corollary}[theorem]{Corollary}

\newtheorem{definition}[theorem]{Definition}
\newtheorem{example}[theorem]{Example}

\newtheorem{lemma}[theorem]{Lemma}

\newtheorem{proposition}[theorem]{Proposition}
\newtheorem{remark}[theorem]{Remark}

\begin{document}
\title[Paley-Wiener Theorem]{A Paley-Wiener theorem for multi-anisotropic\\
ultradifferentiable functions}
\author{Chikh BOUZAR}
\address{Laboratory of Mathematical Analysis and Applications. Oran
University 1, Algeria.}
\email{ch.bouzar@gmail.com}
\date{}

\begin{abstract}
The present work presents a real Paley-Wiener Theorem within the space of
multi-anisotropic ultradifferentiable functions. We also show new properties
of this space.
\end{abstract}

\dedicatory{To the memory of Luiza Zanghirati}
\maketitle

\section{Introduction}

The multi-anisotropic Gevrey space $\mathcal{E}^{s,\Gamma }(\Omega ),$ whose
definition was given explicitly in the mathematical literature by Luisa
Zanghirati in the eighties of the last century in the paper \cite{Z}, has
been the subject of many works and applications. This space enters the class
of the so-called spaces of ultradifferentiable functions considered in
different fields of Mathematics as extensions or generalisations of the
classical space of real analytic functions $\mathcal{A}(\Omega ).$ A
well-known one of them that also interest us in this paper is the
Denjoy-Carleman space $\mathcal{E}^{M}\left( \Omega \right) ,$ see \cite%
{Carl}, \cite{Denj}, \cite{Horm}. The Beurling space $\mathcal{E}^{\omega
}\left( \Omega \right) $ in the sense of \cite{BMT}, see \cite{Beur} and
\cite{Bjor}, can be considered as an other facet of the Denjoy-Carleman
space, but not a duplicata. As more general than the space $\mathcal{E}%
^{s,\Gamma }(\Omega ),$ is the inhomogeneous Gevrey space $\mathcal{E}%
^{s,\lambda }(\Omega )$ of \cite{L-R}, see \cite{R}, defined in the spirit
of the space $\mathcal{C}^{L}(\Omega )$\ studied by L. H\"{o}rmander, see
\cite{Horm}.

Recall that the space $\mathcal{E}^{s,\Gamma }(\Omega )$\ has its defining
parameters the weight sequence $(k!^{s})_{k}$ and a regular Newton
polyhedron $\Gamma ,$\ and has its particular case the classical anisotropic
Gevrey space $\mathcal{E}^{s,\left( l_{1},...,l_{n}\right) }(\Omega )$ due
to M. Gevrey \cite{Gevr}. The space $\mathcal{E}^{M}\left( \Omega \right) $
is defined with the help of a weight sequence $M$\ and the space $\mathcal{E}%
^{\omega }\left( \Omega \right) $\ is defined with the help of a weight
function $\omega .$ While the space $\mathcal{E}^{s,\lambda }(\Omega )$\
depends on the\ weight sequence $(k!^{s})_{k}$ and an other type of weight
function $\lambda .$\ \ \ \

The study of the relationship between these cited spaces is a natural
question. The paper \cite{BMM} responded to this issue by finding the
relationship between the spaces $\mathcal{E}^{M}\left( \Omega \right) $\ and
$\mathcal{E}^{\omega }\left( \Omega \right) ,$ while the work \cite%
{CalvGomez} targeted the spaces $\mathcal{E}^{\omega }\left( \Omega \right) $
and $\mathcal{E}^{s,\lambda }(\Omega ).$ The paper \cite{Bouz} gives an
answer to this question for the spaces $\mathcal{E}^{M}\left( \Omega \right)
$\ and $\mathcal{E}^{s,\Gamma }(\Omega ),$ it also introduced a class of
ultradifferentiable functions, depending on a weight sequence $M$\ and a
regular Newton polyhedron $\Gamma ,$ unifying both spaces, denoted by $%
\mathcal{E}^{M,\Gamma }\left( \Omega \right) $ and called the space of
multi-anisotropic ultradifferentiable functions.

A Fourier analysis and even a micro-local analysis with respect to the $%
\mathcal{E}^{M}$-regularity\ is well-known, see e.g. H\"{o}rmander's book
\cite{Horm} and the references therein. What about the class $\mathcal{E}%
^{s,\Gamma }(\Omega )$ very few is done. Indeed, a first work on a
micro-local analysis with respect to the classical anisotropic Gevrey space
is done by L. Zanghirati in \cite{Z2}, then an attempt to go further to
study the $\mathcal{E}^{s,\Gamma }$-micro-regularity in the framework of
distributions\ is proposed by the paper \cite{Cor}\ following the approach
proposed within inhomogeneous Gevrey classes.

So, this paper has as its aim to present a first step for a Fourier analysis
related to the $\mathcal{E}^{M,\Gamma }$-regularity paving the way to a $%
\mathcal{E}^{M,\Gamma }$-micro-local analysis which will be done in a
forthcoming work. We prove a Paley-Wiener type Theorem within the space of
multi-anisotropic ultradifferentiable functions. We also show new properties
of the space $\mathcal{E}^{M,\Gamma }\left( \Omega \right) .$

The paper is organised as follows. The second section is devoted to recall
classical spaces of ultradifferentiable functions such the Denjoy-Carleman
space and the multi-anisotropic Gevrey space. The third section presents the
space of multi-anisotropic ultradifferentiable functions and some new
properties. In the last section a Paley-Wiener type Theorem for the space $%
\mathcal{E}^{M,\Gamma }\left( \Omega \right) $ is proved, as a consequence
of this result we compare, in the sense of inclusion, the spaces of
multi-anisotropic ultradifferentiable functions.

\section{Ultradifferentiable functions}

We keep the classical definitions and results of the theory of distributions
as in \cite{Horm}, as well as the notations. So $\mathcal{E}\left( \Omega
\right) $\ denotes the space of infinitely differentiable functions defined
in a non empty open set $\Omega $ of the Euclidean space $\mathbb{R}^{n}.$

\textbf{The space of multi-anisotropic Gevrey functions }$\mathcal{E}%
^{s,\Gamma }(\Omega ).$

Let $q=\left( q_{1},..,q_{n}\right) \in \mathbb{R}_{+}^{n}:=\left\{ x\in
\mathbb{R}^{n}:x_{j}>0,j=1,...,n\right\} $ and the multi-indice $\alpha
=\left( \alpha _{1},...,\alpha _{n}\right) \in \mathbb{Z}_{+}^{n},$ then $%
\alpha \cdot q:=\sum_{j=1}^{n}\alpha _{j}q_{j}$ $.$

\begin{definition}
The convex hull of $\left\{ 0\right\} \cup A,$ where $A$ is a finite subset
of $\overline{\mathbb{R}_{+}^{n}},$ is said a Newton polyhedron and denoted $%
\Gamma (A\mathbb{)}.$
\end{definition}

Given a Newton polyhedron $\Gamma $ there exists $\mathcal{A}\left( \Gamma
\right) $ a finite subset of $\mathbb{R}^{n}\diagdown \left\{ 0\right\} $
such that
\begin{equation*}
\Gamma \mathbb{=}\underset{q\in \mathcal{A}\left( \Gamma \right) }{\bigcap }%
\left\{ \alpha \in \overline{\mathbb{R}_{+}^{n}},\text{ }\alpha \cdot q\leq
1\right\}
\end{equation*}%
We say that $\Gamma $\ is regular if
\begin{equation*}
q_{j}>0,j=1,...,n,\forall q=\left( q_{1},...,q_{n}\right) \in \mathcal{A}%
\left( \Gamma \right)
\end{equation*}

\begin{remark}
The regularity of a Newton polyhedron $\Gamma $ means that for every point
of $\Gamma $ all its projections on the coordinate planes of various
dimensions belong to $\Gamma $ and it contains no faces parallel to the
coordinate hyperplanes and not belonging to them. As a consequence, $\Gamma $
has vertices on all the coordinates axes.
\end{remark}

The following elements are associated to every regular Newton polyhedron $%
\Gamma ,$
\begin{eqnarray*}
\mathcal{V}\left( \Gamma \right) &=&\left\{ v^{0}=0,\text{ }%
v^{1},...,v^{\sigma \left( \Gamma \right) }\right\} \text{ the finite set of
vertices of }\Gamma \\
k\left( \alpha ,\Gamma \right) &=&\inf \left\{ t>0,\frac{\alpha }{t}\in
\Gamma \right\} \text{ the jauge of }\Gamma \\
\mu _{j}\left( \Gamma \right) &=&\underset{q\in \mathcal{A}\left( \Gamma
\right) }{\max }q_{j}^{-1} \\
\mu \left( \Gamma \right) &=&\underset{1\leq \text{ }j\leq n}{\max }\mu
_{j}\left( \Gamma \right) \text{ the formal order of }\Gamma \\
\gamma (\Gamma ) &=&\underset{v\in \mathcal{V}\left( \Gamma \right) }{\max }%
\left\vert v\right\vert \text{ the maximal order of }\Gamma \\
l\left( \Gamma \right) &=&\left( \frac{1}{\mu _{1}\left( \Gamma \right) }%
,...,\frac{1}{\mu _{n}\left( \Gamma \right) }\right) \text{ the
complementary vector of }\Gamma
\end{eqnarray*}

The multi-anisotropic Gevrey space of the following definition was
introduced with a clear explicity in the mathematical literature by Luisa
Zanghirati in the paper \cite{Z} in order to study a Gevrey type regularity
of multi-quasi-elliptic linear partial differential operators by the method
of elliptic iterates, see also \cite{BCh2}, \cite{BD}, \cite{BD2} and \cite%
{Calv} for other applications. This space was well-known first as the
anisotropic Gevrey space in the study of parabolic and quasi-elliptic linear
partial differential operators, see \cite{Gevr} and \cite{Volev}.

\begin{definition}
Let $\Gamma $ be a regular Newton polyhedron and $s>0.$ The space of $u\in
\mathcal{E}(\Omega )$ satisfying for every compact $H$ of $\Omega $ there
exists $C>0$ such that for every $\alpha \in \mathbb{Z}_{+}^{n}$\ we have
\begin{equation}
\left\Vert \partial ^{\alpha }u\right\Vert _{L^{\infty }(H)}\leqslant
C^{\left\vert \alpha \right\vert +1}k(\alpha ,\Gamma )^{s\mu \left( \Gamma
\right) k(\alpha ,\Gamma )}  \label{4.1.1}
\end{equation}%
denoted $\mathcal{E}^{s,\Gamma }(\Omega ),$\ is called the multi-anisotropic
Gevrey space of order $s.$
\end{definition}

\begin{remark}
The value $k(\alpha ,\Gamma )^{s\mu \left( \Gamma \right) k(\alpha ,\Gamma
)} $\ can be replaced by the equivalent one $\left\vert \alpha \right\vert
^{s\mu \left( \Gamma \right) k(\alpha ,\Gamma )}.$
\end{remark}

\begin{example}
\label{anisotropic}Let $\left( l_{1},...,l_{n}\right) \in \mathbb{R}_{+}^{n}$
with $\min_{j}l_{j}=1.$\ The classical anisotropic Gevrey space is defined
and denoted as%
\begin{equation*}
\mathcal{E}^{s,\left( l_{1},...,l_{n}\right) }\left( \Omega \right) =\left\{
\begin{array}{c}
u\in \mathcal{E}\left( \Omega \right) :\forall H\text{ compact of }\Omega
,\exists C>0,\forall \alpha \in \mathbb{Z}_{+}^{n}, \\
\left\Vert \partial ^{\alpha }u\right\Vert _{L^{\infty }(H)}\leq
C^{\left\vert \alpha \right\vert +1}\alpha _{1}!^{sl_{1}}...\alpha
_{n}!^{sl_{n}}%
\end{array}%
\right\}
\end{equation*}%
We have $\mathcal{E}^{s,\left( l_{1},...,l_{n}\right) }(\Omega )=\mathcal{E}%
^{s,I_{q}}(\Omega ),$ where $I_{q}$ is the regular Newton polyhedron defined
by the simplex
\begin{equation*}
I_{q}:\mathbb{=}\left\{ \alpha \in \overline{\mathbb{R}_{+}^{n}}:\alpha
\cdot q\leq 1\right\} ,
\end{equation*}%
where $q=(\dfrac{l_{1}}{\max l_{j}},...,\dfrac{l_{n}}{\max l_{j}}).$ In such
case of the simplex $I_{q},$ we have%
\begin{eqnarray*}
\mathcal{V}\left( I_{q}\right) &=&\left\{ 0,\;\frac{\max l_{j}}{l_{j}}%
e_{j},\;j=1,..,n\right\} , \\
k(\alpha ,I_{q}) &=&\alpha \cdot q\  \\
\mu _{j}\left( I_{q}\right) &=&\frac{\max l_{j}}{l_{j}} \\
\mu \left( I_{q}\right) &=&\max l_{j} \\
l\left( I_{q}\right) &=&q
\end{eqnarray*}%
where $\left\{ e_{j},\;j=1,...,n\right\} \ $represents the canonical basis
of $\mathbb{R}^{n}.$
\end{example}

\begin{example}
The classical isotropic Gevrey space $\mathcal{E}^{s}\left( \Omega \right) $
coresponding to $l_{1}=...=l_{n}=1$\ is the space $\mathcal{E}^{s,I}(\Omega
),$ where $I:\mathbb{=}\left\{ \alpha \in \overline{\mathbb{R}_{+}^{n}}%
:\left\vert \alpha \right\vert \leq 1\right\} $\ is the unitary simplex. It
is well-know that $\mathcal{A}(\Omega )=\mathcal{E}^{1}\left( \Omega \right)
,$ i.e. $\mathcal{A}(\Omega )=\mathcal{E}^{1,I}(\Omega ).$ \
\end{example}

\begin{remark}
The study of the space $\mathcal{E}^{s,\Gamma }\left( \Omega \right) ,$
where $\Gamma $\ is a regular Newton polyhedron having vertices in $\mathbb{Q%
}_{+}^{n},$\ is reduced to the case of a regular Newton polyhedron having
vertices in $\mathbb{N}^{n}.$
\end{remark}

\textbf{The space of ultradifferentiable functions }$\mathcal{E}^{M}\left(
\Omega \right) .$

A sequence of positive real numbers $\left( M_{p}\right) _{p\in \mathbb{Z}%
_{+}}$ is said to satisfy the following condition of

logarithmic convexity, if $\forall p\in \mathbb{N},$%
\begin{equation}
M_{p}^{2}\leq M_{p-1}M_{p+1}  \tag{H1}
\end{equation}

stability under multiplication, if $\forall p,q\in \mathbb{Z}_{+},$%
\begin{equation}
M_{p}M_{q}\leq M_{0}M_{p+q}  \tag{H1'}
\end{equation}

stability under ultradifferential operators, if $\exists A>0,\exists
H>0,\forall p,q\in \mathbb{Z}_{+},$
\begin{equation}
M_{p+q}\leq AH^{p+q}M_{p}M_{q}  \tag{H2}
\end{equation}

stability under differential operators, if $\exists A>0,\exists H>0,\forall
p\in \mathbb{Z}_{+},$
\begin{equation}
M_{p+1}\leq AH^{p}M_{p}  \tag{H2'}
\end{equation}

non-quasianalyticity, if
\begin{equation}
\sum\limits_{p=1}^{\infty }\dfrac{M_{p-1}}{M_{p}}<+\infty  \tag{H3'}
\end{equation}

\begin{remark}
We always have $\left( Hj\right) \Rightarrow \left( Hj^{\prime }\right)
,j=1,2.$
\end{remark}

\begin{remark}
By the Denjoy-Carlemann Theorem, under $\left( H_{1}\right) ,$ the condition
$\left( H3^{\prime }\right) $ is equivalent to $\overset{\infty }{\underset{%
p=1}{\sum }}\dfrac{1}{M_{p}^{\frac{1}{p}}}<\infty .$\
\end{remark}

A $M$-ultradifferential operator is a differential operator of infinite
order $P=\sum\limits_{\alpha }a_{\alpha }D^{\alpha }$ satisfying for every $%
h>0$ there exists $c>0$ such that for every $\alpha \in \mathbb{Z}_{+}^{n}$\
we have
\begin{equation*}
\left\vert a_{\alpha }\right\vert M_{\left\vert \alpha \right\vert }\leq
ch^{\left\vert \alpha \right\vert }
\end{equation*}

\begin{definition}
The space of $M$-ultradifferentiable functions, denoted $\mathcal{E}%
^{M}\left( \Omega \right) ,$ is the set of $u\in \mathcal{E}\left( \Omega
\right) $ satisfying that for every compact $K$ of $\Omega ,\exists
C>0,\forall \alpha \in \mathbb{Z}_{+}^{n},$%
\begin{equation*}
\left\Vert \partial ^{\alpha }u\right\Vert _{L^{\infty }(K)}\leq
C^{\left\vert \alpha \right\vert +1}M_{\left\vert \alpha \right\vert }
\end{equation*}
\end{definition}

\begin{example}
If $\left( M_{p}\right) _{p\in \mathbb{Z}_{+}}=\left( p!^{s}\right) _{p\in
\mathbb{Z}_{+}},s>0,$ we obtain the classical isotropic Gevrey space $%
\mathcal{E}^{s}\left( \Omega \right) $ of order $s.$ The space of real
analytic functions $\mathcal{A}\left( \Omega \right) $ corresponds to the
space $\mathcal{E}^{1}\left( \Omega \right) .$
\end{example}

The basic properties of the space $\mathcal{E}^{M}\left( \Omega \right) $\
are summarized in the following proposition, see \cite{Kom} and \cite{Horm}
for more details on ultradifferentiable functions of Denjoy-Carleman type.

\begin{proposition}
If $\left( M_{p}\right) _{p\in \mathbb{Z}_{+}}$ satisfies $\left( H1^{\prime
}\right) $ the space $\mathcal{E}^{M}\left( \Omega \right) $\ is stable
under product. Moreover if $\left( M_{p}\right) _{p\in \mathbb{Z}_{+}}$
satisfies $\left( H2^{\prime }\right) $ then $\mathcal{E}^{M}\left( \Omega
\right) $ is stable by any linear differential operator of finite order with
coefficients from $\mathcal{E}^{M}\left( \Omega \right) ,$ and if $\left(
M_{p}\right) _{p\in \mathbb{Z}_{+}}$ satisfies $\left( H2\right) $ then any
ultradifferential operator of class $M$ operates as a sheaf homomorphism.

The space $\mathcal{D}^{M}\left( \Omega \right) :=\mathcal{E}^{M}\left(
\Omega \right) \cap \mathcal{D}\left( \Omega \right) $ is not trivial if and
only if the sequence $\left( M_{p}\right) _{p\in \mathbb{Z}_{+}}$ satisfies $%
\left( H3^{\prime }\right) .$
\end{proposition}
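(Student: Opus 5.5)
We treat the four assertions of the proposition in turn, using only the Leibniz formula, the conditions $(H1')$, $(H2')$, $(H2)$, $(H3')$, and, for the last assertion, the Denjoy--Carleman theorem recalled in the remark above.

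\emph{Stability under product.} Let $u,v\in\mathcal{E}^M(\Omega)$ and fix a compact $K$, with constants $C_u$ and $C_v$. By Leibniz,
\begin{equation*}
\|\partial^\alpha(uv)\|_{L^\infty(K)}\le\sum_{\beta\le\alpha}\binom{\alpha}{\beta}C_u^{|\beta|+1}C_v^{|\alpha-\beta|+1}M_{|\beta|}M_{|\alpha-\beta|}.
\end{equation*}
By $(H1')$, $M_{|\beta|}M_{|\alpha-\beta|}\le M_0M_{|\alpha|}$. Using $\sum_\beta\binom{\alpha}{\beta}=2^{|\alpha|}$, we get the bound $C^{|\alpha|+1}M_{|\alpha|}$ with $C=2\max(C_u,C_v,1)\max(1,M_0)$ after adjusting constants.

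\emph{Stability under differential operators of finite order.} It suffices to treat $\partial_j$ and multiplication by a coefficient $a\in\mathcal{E}^M(\Omega)$; the latter is covered by the product step. For $\partial_j$ we have $\|\partial^{\alpha}\partial_ju\|\le C^{|\alpha|+2}M_{|\alpha|+1}$. By $(H2')$ this is at most $AC^{|\alpha|+2}H^{|\alpha|}M_{|\alpha|}$, which has the required form with the constant $\max(CH,\,AC^2)$. Iterating over the finitely many terms of the operator gives the claim.

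\emph{Ultradifferential operators.} Let $P=\sum a_\gamma D^\gamma$ and $u\in\mathcal{E}^M$ on a neighbourhood $\omega$ of $K$ with constant $C$.
\begin{enumerate}[(i)]
\item We estimate $\|\partial^\alpha Pu\|_{L^\infty(K)}\le\sum_\gamma|a_\gamma|C^{|\alpha|+|\gamma|+1}M_{|\alpha|+|\gamma|}$.
\item By $(H2)$, $M_{|\alpha|+|\gamma|}\le AH^{|\alpha|+|\gamma|}M_{|\alpha|}M_{|\gamma|}$.
\item Choose $h=1/(2CH)$ in the definition of $P$, so that $|a_\gamma|M_{|\gamma|}\le c_hh^{|\gamma|}$.
\end{enumerate}
The series then converges like $\sum_\gamma 2^{-|\gamma|}$, and we obtain $\|\partial^\alpha Pu\|\le C'^{|\alpha|+1}M_{|\alpha|}$. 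The same convergence shows $Pu$ is well defined and smooth. Since $P$ only uses local values, it commutes with restrictions, so it is a sheaf homomorphism. The main point is exactly this choice of $h$ depending on $C$ and $H$. It is possible because the condition on the $a_\gamma$ holds for \emph{every} $h$.

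\emph{Non-triviality of $\mathcal{D}^M$.} If $(H3')$ fails, then $\mathcal{E}^M$ is quasianalytic by the Denjoy--Carleman theorem, using the equivalence stated in the previous remark under $(H1)$, which is implicitly assumed here. A compactly supported function in $\mathcal{E}^M$ then vanishes to infinite order at a boundary point of its support, hence vanishes identically on each connected component of $\Omega$; so $\mathcal{D}^M(\Omega)=\{0\}$.

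Conversely, under $(H3')$ we construct a nonzero element as an infinite convolution. Put $a_p=M_{p-1}/M_p$, which is summable, and
\begin{equation*}
\phi=\lim_{N\to\infty}H_{a_1}*\cdots*H_{a_N}*\chi,
\end{equation*}
where $H_a=a^{-1}\mathbf{1}_{[0,a]}$ and $\chi$ is a fixed bump, as in H\"ormander's Theorem 1.3.5. The support of $\phi$ lies in $[0,\sum_p a_p]+\operatorname{supp}\chi$, which is compact, and we have $|\phi^{(k)}|\le C2^k/(a_1\cdots a_k)=C2^kM_k/M_0$. Taking the tensor product in $n$ variables and rescaling and translating into $\Omega$ gives a nonzero element of $\mathcal{D}^M(\Omega)$; multivariate derivatives are controlled via log-convexity, i.e. $M_{p}M_{q}\le M_0M_{p+q}$.

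This converse construction is the hardest step; since it is classical, we would cite \cite{Horm} and \cite{Kom} for it rather than reprove it.
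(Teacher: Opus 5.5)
Your proposal is correct. It reconstructs the classical argument that the paper does not write out and simply defers to Komatsu and H\"ormander: Leibniz with $(H1')$ for products, $(H2')$ for derivatives, $(H2)$ with $h$ chosen depending on $C$ and $H$ for ultradifferential operators, and Denjoy--Carleman plus H\"ormander's infinite-convolution construction for $\mathcal{D}^M$.

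You are right that the equivalence for $\mathcal{D}^M$ needs the log-convexity $(H1)$, which the statement leaves implicit; without it, $\sum M_{p-1}/M_p$ can diverge while $\mathcal{D}^M(\Omega)\neq\{0\}$. The only slip is cosmetic: your explicit constant in the product step must also absorb $C_uC_vM_0$, for example $C=\max(C_u+C_v,\,C_uC_vM_0)$.
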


\section{Multi-anisotropic ultradifferentiable functions}

The study of the space of multi-anisotropic Gevrey vectors of systems of
linear differential operators reveals a characterization of
multi-anisotropic Gevrey spaces given by the following result, see \cite%
{Bouz} for the proof.

\begin{proposition}
\label{Char1}Let $\left( v_{j}\right) _{j=1}^{\sigma }\subset \mathbb{N}^{n}$
be the set of vertices of a regular Newton polyhedron $\Gamma $ and $s\geq 1$%
\ , then the following statements are equivalent for every $u\in \mathcal{E}%
\left( \Omega \right) ,$

i) $u\in \mathcal{E}^{s,\Gamma }\left( \Omega \right) .$

ii) $\forall H$ compact of $\Omega ,\exists C>0,\forall
k=(k_{1},...,k_{\sigma })\in \mathbb{Z}_{+}^{\sigma },$
\begin{equation}
\left\Vert \partial ^{k_{1}v_{1}}...\partial ^{k_{\sigma }v_{\sigma
}}u\right\Vert _{L^{\infty }\left( H\right) }\leq C^{\left\vert k\right\vert
+1}\left\vert k\right\vert ^{s\mu \left( \Gamma \right) \left\vert
k\right\vert }\quad .  \label{Char-1}
\end{equation}
\end{proposition}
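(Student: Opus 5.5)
We prove the two implications separately. Throughout, $\alpha(k):=k_{1}v_{1}+\dots+k_{\sigma}v_{\sigma}$ denotes the total multi-index of the composite operator in (\ref{Char-1}), so that $\partial^{k_{1}v_{1}}\cdots\partial^{k_{\sigma}v_{\sigma}}=\partial^{\alpha(k)}$. The key geometric facts are the following.

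\begin{enumerate}[(a)]
\item The gauge $k(\cdot,\Gamma)$ is positively homogeneous of degree one and subadditive, since $\Gamma$ is convex and contains $0$.
\item $k(v_{j},\Gamma)\le 1$, because each vertex lies in $\Gamma$.
\item $k(\alpha,\Gamma)\ge c|\alpha|$ and $|\alpha|\le \gamma(\Gamma)\,k(\alpha,\Gamma)$, because $\Gamma$ is regular and hence bounded.
\end{enumerate}

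\textbf{Proof of i) $\Rightarrow$ ii).} Apply (\ref{4.1.1}) with $\alpha=\alpha(k)$. By (a) and (b), $k(\alpha(k),\Gamma)\le\sum_j k_j=|k|$. Moreover $|\alpha(k)|\le\gamma(\Gamma)|k|$, so $C^{|\alpha(k)|+1}\le C'^{|k|+1}$. Since $t\mapsto t^{s\mu t}$ is increasing for $t\ge1$, and the range $t<1$ contributes only a bounded factor, we obtain
\[
k(\alpha(k),\Gamma)^{s\mu(\Gamma)k(\alpha(k),\Gamma)}\le C''\,|k|^{s\mu(\Gamma)|k|}.
\]
This direction is routine.

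\textbf{Proof of ii) $\Rightarrow$ i).} This is the substantive direction. Given an arbitrary $\alpha$, we want to write it as a combination of vertices with controlled total weight. Let $t=k(\alpha,\Gamma)$. Then $\alpha/t$ lies on the boundary of $\Gamma$, so by Carathéodory it is a convex combination of vertices (including $0$): $\alpha=\sum_j\lambda_j v_j$ with $\lambda_j\ge0$ and $\sum\lambda_j\le t$. Put $k_j=\lfloor\lambda_j\rfloor$. Then $\beta:=\alpha-\alpha(k)=\sum_j(\lambda_j-k_j)v_j$ is a nonnegative vector, but it need not be a multi-index dominated by $\alpha$. The difficulty is integrality: we need $\alpha(k)\le\alpha$ componentwise with integer remainder.

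To handle this, first note that $\alpha(k)\le\alpha$ holds componentwise, since $\beta\ge0$. Because $v_j\in\mathbb{N}^n$ and $\alpha\in\mathbb{Z}_+^n$, the remainder $\beta$ is then an integer multi-index. Also $\beta\in\sum_j[0,1)v_j$, so $|\beta|\le\sigma\gamma(\Gamma)$ is bounded independently of $\alpha$. Thus
\[
\partial^{\alpha}u=\partial^{\beta}\,\partial^{k_{1}v_{1}}\cdots\partial^{k_{\sigma}v_{\sigma}}u,
\]
with $\beta$ ranging over a finite set $B$.

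It remains to absorb $\partial^{\beta}$. For this, we prove ii) on compacts for $\partial^{\beta}u$ rather than $u$. Each $e_i$ satisfies $e_i\le v$ for the vertex $v$ on the $i$-th axis, by the regularity Remark. Hence $\partial^{\beta}$ is a factor of some $\partial^{m_1v_{i_1}}\cdots$ with bounded $m$. However, a factor is not the same as a derivative of that vertex order. A cleaner route is to apply ii) to $\partial^{\beta}u$ via a Sobolev/Cauchy-type interpolation: estimate $\|\partial^{\beta}w\|_{L^\infty(H)}$ by $\sup_{\gamma\le\beta'}\|\partial^{\gamma}w\|_{L^\infty(H')}$ for finitely many orders. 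Alternatively, and more simply, we can use that $\beta\le\alpha(m)$ for a fixed vertex combination $m$ (take $m=\sum_i\beta_i e$ along the axis vertices), and control the intermediate derivative $\partial^{\beta}$ of $w=\partial^{\alpha(k)}u$ by Landau–Kolmogorov-type inequalities between $w$ and $\partial^{\alpha(m)}w$ on a slightly larger compact. Since $\alpha(m)$ is bounded, these inequalities cost only constants. The result is
\[
\|\partial^{\alpha}u\|_{L^\infty(H)}\le C\max_{|m|\le m_0}\|\partial^{\alpha(k+m)}u\|_{L^\infty(H')},
\]
and then ii) yields $C^{|k|+1}(|k|+m_0)^{s\mu(|k|+m_0)}$.

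Finally, $|k|\le\sum\lambda_j\le t=k(\alpha,\Gamma)$, and $(t+m_0)^{s\mu(t+m_0)}\le C^{t+1}t^{s\mu t}$. Together with $t\le c^{-1}|\alpha|$, this gives (\ref{4.1.1}), using Remark (the $|\alpha|$ version) where convenient. The main obstacle is the interpolation step controlling the bounded leftover derivative $\partial^{\beta}$ uniformly. If Landau–Kolmogorov is awkward, we would instead use the Cauchy integral / Taylor remainder formula on cubes, which also needs only the hypothesis $s\ge1$ for the Gevrey-type absorption of shift constants.
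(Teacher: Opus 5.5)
Your route is sound in outline and differs from the paper's. The paper gives no proof of this proposition and refers to \cite{Bouz}. As the text before the statement says, there the characterization comes out of the study of Gevrey vectors of systems of operators (presumably the system $\partial^{v_1},\dots,\partial^{v_\sigma}$).

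Direction i)$\Rightarrow$ii) is fine. So is your decomposition $\alpha=\alpha(k)+\beta$ with $k_j=\lfloor\lambda_j\rfloor$: it gives $|k|\le k(\alpha,\Gamma)$, $\beta\in\mathbb{Z}_+^n$ and $|\beta|\le\sigma\gamma(\Gamma)$, and your initial worry about integrality is unfounded, as you then notice.

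The one step you leave open is absorbing $\partial^{\beta}$. It is true, but not in the form you first describe. Interpolating between $w$ and a single $\partial^{\alpha(m)}w$ with $\beta\le\alpha(m)$ cannot work. For $w=w(x_1)$ one has $\partial_1^N\partial_2^N w=0$, while $\|\partial_1 w\|_{L^\infty}$ is not controlled by $\|w\|_{L^\infty}$.

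What you need is the following. Let $a_1e_1,\dots,a_ne_n$ be the vertices on the coordinate axes, which exist by regularity. Let $N$ be a common multiple of the $a_i$ with $N\ge\sigma\gamma(\Gamma)$, and take $\delta>0$ with $H':=H+[0,N\delta]^n\subset\Omega$. In one variable, Taylor's formula at the nodes $x+l\delta$, $l=0,\dots,N-1$, and inversion of the Vandermonde matrix give
$|f^{(j)}(x)|\le C_{N,\delta}\bigl(\max_l|f(x+l\delta)|+\|f^{(N)}\|_{L^\infty([x,x+N\delta])}\bigr)$ for $j\le N$.
Applying this successively in $x_1,\dots,x_n$ yields, for all $|\beta|\le\sigma\gamma(\Gamma)$ and all $w\in\mathcal{E}(\Omega)$,
\[
\|\partial^{\beta}w\|_{L^\infty(H)}\le C\sum_{S\subset\{1,\dots,n\}}\|\partial^{Ne_S}w\|_{L^\infty(H')},\qquad e_S:=\sum_{i\in S}e_i .
\]
Since $\partial^{Ne_S}=\prod_{i\in S}(\partial^{a_ie_i})^{N/a_i}=\partial^{\alpha(m_S)}$ with $|m_S|\le m_0:=\sum_i N/a_i$, this is exactly your display $\|\partial^{\alpha}u\|_{L^\infty(H)}\le C\max_{|m|\le m_0}\|\partial^{\alpha(k+m)}u\|_{L^\infty(H')}$. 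The Cauchy-integral alternative you mention is not available for merely smooth $w$.

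Two smaller points. First, at the end you need the \emph{upper} bound $k(\alpha,\Gamma)\le C|\alpha|$ to turn $C^{k(\alpha,\Gamma)}$ into $C^{|\alpha|}$, but both halves of your (c) are lower bounds. The upper bound holds because $\Gamma$ contains the simplex spanned by $0,a_1e_1,\dots,a_ne_n$. Second, $s\ge1$ is used nowhere in your argument: the absorption $(t+m_0)^{s\mu(t+m_0)}\le C^{t+1}t^{s\mu t}$ holds for every $s>0$.

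So your elementary route, an exact vertex decomposition plus a bounded-order interpolation in $L^\infty$, proves the equivalence for all $s>0$ and works directly with sup norms. The Gevrey-vector (iterates) machinery the paper relies on is aimed at general systems of operators and is presumably where the restriction $s\ge1$ comes from.
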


As a consequence we obtain a characterisation of the classical anisotropic
Gevrey spaces.

\begin{corollary}
Let $\left( l_{1},..,l_{n}\right) \in \mathbb{Q}_{+}^{n}$ \ with $%
\min_{j}l_{j}=1,$ then the anisotropic Gevrey space $\mathcal{E}^{s,\left(
l_{1},..,l_{n}\right) }\left( \Omega \right) $ coincides with the space of
functions $u\in \mathcal{E}\left( \Omega \right) $\ such that $\forall H$
compact of $\Omega ,\exists C>0,\forall k\in \mathbb{Z}_{+}^{n},$ we have
\begin{equation*}
\left\Vert \partial _{1}^{k_{1}m_{1}}...\partial
_{n}^{k_{n}m_{n}}u\right\Vert _{L^{\infty }\left( H\right) }\leq
C^{\left\vert k\right\vert +1}\left\vert k\right\vert ^{sm\left\vert
k\right\vert },
\end{equation*}%
where $m=\max l_{j}$\ and such that $m_{j}:=\dfrac{m}{l_{j}}\in \mathbb{N}%
,j=1,...,n.$\
\end{corollary}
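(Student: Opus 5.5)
This corollary follows by specialising Proposition~\ref{Char1} to the simplex $\Gamma=I_q$ of Example~\ref{anisotropic}, with $q=(l_1/m,\dots,l_n/m)$ and $m=\max l_j$. By that example, $\mathcal{E}^{s,(l_1,\dots,l_n)}(\Omega)=\mathcal{E}^{s,I_q}(\Omega)$. The same example gives the nonzero vertices of $I_q$ as $\frac{m}{l_j}e_j=m_je_j$, $j=1,\dots,n$, and gives $\mu(I_q)=m$. We may take $s\ge 1$ as in Proposition~\ref{Char1}.

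\textbf{Step 1: integer vertices.} Proposition~\ref{Char1} requires the vertices to lie in $\mathbb{N}^n$. With the hypothesis $m_j=m/l_j\in\mathbb{N}$, the vertices $v_j=m_je_j$ are in $\mathbb{N}^n$ and $\sigma(I_q)=n$. If only $l\in\mathbb{Q}_+^n$ were assumed, one would first invoke the remark that the rational-vertex case reduces to the integer case. The natural route is to rescale $\Gamma$ by a common denominator $N$: this changes $k(\alpha,\Gamma)$ and $\mu(\Gamma)$ by the factor $1/N$ and $N$ respectively, so their product $\mu(\Gamma)k(\alpha,\Gamma)$ is unchanged. The statement as written, however, already assumes $m_j\in\mathbb{N}$, so the rescaling is not needed here.

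\textbf{Step 2: substitution.} For $k=(k_1,\dots,k_n)\in\mathbb{Z}_+^n$ we have
\[
\partial^{k_1v_1}\cdots\partial^{k_nv_n}=\partial_1^{k_1m_1}\cdots\partial_n^{k_nm_n},
\]
and the bound (\ref{Char-1}) becomes $C^{|k|+1}|k|^{s\mu(I_q)|k|}=C^{|k|+1}|k|^{sm|k|}$. Statement ii) of Proposition~\ref{Char1} is therefore exactly the condition in the corollary, and the equivalence i)$\Leftrightarrow$ii) gives the result.

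The only point needing care is checking that the vertex set and formal order computed in Example~\ref{anisotropic} match the hypotheses of Proposition~\ref{Char1}: $I_q$ must be regular, which holds since $q_j>0$, and its vertices must be integral, which is the role of the assumption $m_j\in\mathbb{N}$. The rest is direct substitution.
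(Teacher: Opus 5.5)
Your proposal is correct and follows the paper's route. The paper states the corollary as an immediate consequence of Proposition~\ref{Char1} applied to the simplex $I_q$ of Example~\ref{anisotropic}. That simplex has nonzero vertices $m_je_j$ and formal order $\mu(I_q)=m$, so (\ref{Char-1}) becomes exactly the stated estimate.

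Your side remarks are accurate and go slightly beyond what the paper spells out. The restriction $s\ge 1$ is inherited from Proposition~\ref{Char1}. In the merely rational case, rescaling $\Gamma\mapsto N\Gamma$ leaves $\mu(\Gamma)k(\alpha,\Gamma)$ invariant.
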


\begin{example}
A function $u\in \mathcal{E}^{s,(2,1)}(\Omega )$\ if and only if $u\in
\mathcal{E}\left( \Omega \right) $\ and for every compact $H$ of $\Omega $\
there exists $C>0$ such that%
\begin{equation*}
\left\Vert \partial _{t}^{k_{1}}\partial _{x}^{2k_{2}}u\right\Vert
_{L^{\infty }\left( H\right) }\leq
C^{k_{1}+k_{2}+1}(k_{1}+k_{2})!^{2s},\forall k=(k_{1},k_{2})\in \mathbb{Z}%
_{+}^{2}.
\end{equation*}%
These estimates allow to show the anisotropic Gevrey regularity of the
solutions of the heat equation. \
\end{example}

Let $\left( v_{j}\right) _{j=1}^{\sigma }\subset \mathbb{N}^{n}$ be the set
of vertices of a regular Newton polyhedron $\Gamma ,$ the differential
operator%
\begin{equation*}
\partial _{\Gamma }:=\partial ^{v_{1}}\cdots \partial ^{v_{\sigma }}
\end{equation*}%
is said the $\Gamma $-derivation. The mixed derivative of order $%
k=(k_{1},...,k_{\sigma })\in \mathbb{Z}_{+}^{\sigma }$ is defined as%
\begin{equation*}
\partial _{\Gamma }^{k}:=\partial ^{k_{1}v_{1}}\cdots \partial ^{k_{\sigma
}v_{\sigma }}
\end{equation*}

All what have been said legitimizes to introduce the space of
multi-anisotropic ultradifferentiable functions.

\begin{definition}
Let $M=\left( M_{j}\right) _{j=0}^{\infty }$ be a sequence\ of real positive
numbers and $\Gamma $ a regular Newton polyhedron. The space of
multi-anisotropic ultradifferentiable functions, denoted $\mathcal{E}%
^{M,\Gamma }\left( \Omega \right) ,$ is defined as the space of functions $%
u\in \mathcal{E}\left( \Omega \right) $\ such that $\forall H$ compact of $%
\Omega ,\exists C>0,\forall k\in \mathbb{Z}_{+}^{\sigma },$
\begin{equation}
\left\Vert \partial _{\Gamma }^{k}u\right\Vert _{L^{\infty }\left( H\right)
}\leq C^{\left\vert k\right\vert +1}M_{\left\vert k\right\vert }  \label{Def}
\end{equation}
\end{definition}

The spaces $\mathcal{E}^{M,\Gamma }\left( \Omega \right) $ gives as examples
the cited classical spaces of ultradifferentiable functions.

\begin{example}
The classical Denjoy-Carleman space $\mathcal{E}^{M}\left( \Omega \right) $
corresponds to the space $\mathcal{E}^{M,\Gamma }\left( \Omega \right) $\
with $\Gamma $ the identity simplex $I.$ In particular, the classical Gevrey
space $\mathcal{E}^{s}(\Omega )$ has $M$ equals the Gevrey sequence $\left(
p^{sp}\right) _{p}$
\end{example}

\begin{example}
The classical anisotropic Gevrey space $\mathcal{E}^{s,\left(
l_{1},..,l_{n}\right) }(\Omega )$ corresponds to $\mathcal{E}%
^{M,I_{q}}\left( \Omega \right) ,$ where $M=$ $\left( p^{s\mu p}\right) _{p}$
and $\mu =\max l_{j}.$ Recalling that $I_{q}\mathbb{=}\left\{ \alpha \in
\overline{\mathbb{R}_{+}^{n}}:\alpha \cdot q\leq 1\right\} $ and $q=(\dfrac{%
l_{1}}{\mu },...,\dfrac{l_{n}}{\mu }).$
\end{example}

\begin{example}
The multi-anisotropic Gevrey space $\mathcal{E}^{s,\Gamma }(\Omega )$
corresponds to $\mathcal{E}^{M,\Gamma }\left( \Omega \right) $ with $M$
equals the Gevrey sequence $\left( p^{s\mu (\Gamma )p}\right) _{p},$ see
Proposition (\ref{Char1}).
\end{example}

We present a concrete example coming from linear partial differential
operators, for more details see \cite{BD}\ and \cite{BD2}.

\begin{example}
The composition of the Cauchy-Riemann operator with the heat operator,
denoted here $P(D),$ being the most simple multi-quasielliptic operator, has
its symbol the polynomial%
\begin{equation*}
P\left( \xi ,\eta \right) =\xi ^{2}-i\xi \eta ^{2}+i\xi \eta +\eta ^{3},
\end{equation*}%
which generates a regular Newton polyedron $\Gamma \left( P\right) ,$ see
\cite{GV}, with associated elements%
\begin{equation*}
\mathcal{V}\left( P\right) =\left\{ \left( 0,0\right) ,\left( 2,0\right)
,\left( 1,2\right) ,\left( 0,3\right) \right\}
\end{equation*}%
\begin{equation*}
\mathcal{A}\left( P\right) =\left\{ \left( \frac{1}{2},\frac{1}{4}\right)
,\left( \frac{1}{3},\frac{1}{3}\right) \right\}
\end{equation*}%
\begin{equation*}
\mu \left( P\right) =4,
\end{equation*}%
\begin{equation*}
k\left( \alpha ,P\right) =\left\{
\begin{array}{c}
\frac{1}{2}\alpha _{1}+\frac{1}{4}\alpha _{2},if\text{\ }\alpha _{2}\leq
2\alpha _{1} \\
\frac{1}{3}\alpha _{1}+\frac{1}{3}\alpha _{2},if\text{ }\alpha _{2}\geq
2\alpha _{1}%
\end{array}%
\right.
\end{equation*}%
and we have%
\begin{equation*}
G^{s,\text{ }\Gamma \left( P\right) }\left( \Omega \right) =\left\{
\begin{array}{c}
u\in \mathcal{C}^{\infty }\left( \Omega \right) ,\text{ }\forall K\subset
\subset \Omega ,\text{ }\exists C>0,\text{ }\forall \left( \alpha
_{1},\alpha _{2}\right) \in \mathbb{Z}_{+}\times \mathbb{Z}_{+}. \\
\underset{x\in K}{\sup }\left\vert D^{\alpha }u\left( x\right) \right\vert
\leq C^{\alpha _{1}+\alpha _{2}+1}\left( \frac{1}{2}\alpha _{1}+\frac{1}{4}%
\alpha _{2}\right) ^{4s(\frac{1}{2}\alpha _{1}+\frac{1}{4}\alpha _{2})},%
\text{ if }\alpha _{2}\leq 2\alpha _{1}. \\
\underset{x\in K}{\sup }\left\vert D^{\alpha }u\left( x\right) \right\vert
\leq C^{\alpha _{1}+\alpha _{2}+1}\left( \frac{1}{3}\alpha _{1}+\frac{1}{3}%
\alpha _{2}\right) ^{4s(\frac{1}{3}\alpha _{1}+\frac{1}{3}\alpha _{2})},%
\text{ if }\alpha _{2}\geq 2\alpha _{1}.%
\end{array}%
\right\}
\end{equation*}%
It is known that the operator $P\left( D\right) $ is hypoelliptic and if $%
u\in \mathcal{D}^{\prime }\left( \Omega \right) $ satisfies the equation $%
P\left( D\right) u=0$ then we have $u\in G^{s,\Gamma \left( P\right) }\left(
\Omega \right) ,$ but in the language of the space of multi-anisotropic
ultradifferentiable functions, we have that%
\begin{equation*}
G^{s,\text{ }\Gamma \left( P\right) }\left( \Omega \right) =\left\{
\begin{array}{c}
u\in \mathcal{C}^{\infty }\left( \Omega \right) ,\text{ }\forall K\subset
\Omega ,\text{ }\exists C>0,\text{ }\forall \alpha =\left( \alpha
_{1},\alpha _{2},\alpha _{3}\right) \in \mathbb{Z}_{+}\times \mathbb{Z}%
_{+}\times \mathbb{Z}_{+}, \\
\underset{x\in K}{\sup }\left\vert \partial _{x_{1}}^{2\alpha _{1}+\alpha
_{2}}\partial _{x_{2}}^{2\alpha _{2}+3\alpha _{3}}u\left( x\right)
\right\vert \leq C^{\left\vert \alpha \right\vert +1}\left\vert \alpha
\right\vert ^{4s\left\vert \alpha \right\vert }.%
\end{array}%
\right\}
\end{equation*}%
These are the more precise estimates of ultradifferentiability type for the
solutions of the equation $P\left( D\right) u=0.$
\end{example}

\begin{remark}
\label{H1H2}Conditions $\left( H1^{\prime }\right) ,\left( H2\right) $
implies $\exists A>0,\exists C>,\exists H>0,\forall (p_{1},...,p_{l})\in
\mathbb{Z}_{+}^{l},$
\begin{equation*}
A^{l-1}M_{p_{1}}\times ...\times M_{p_{l}}\leq M_{p_{1}+...+p_{l}}\leq
C^{l-1}H^{p_{1}+...+p_{l}}M_{p_{1}}\times ...\times M_{p_{l}}
\end{equation*}%
So we will write $M_{p_{1}+...+p_{l}}\approx M_{p_{1}}\times ...\times
M_{p_{l}}.$
\end{remark}

\begin{proposition}
Given $M=\left( M_{p}\right) _{p=0}^{\infty }$ a sequence\ of real positive
numbers satisfying $\left( H1^{\prime }\right) $ and $\left( H2\right) ,$
and $\Gamma $ a regular Newton polyhedron, then there exists a sequence of
positive numbers $N=\left( N_{p}\right) _{p=0}^{\infty }$\ such that $%
\mathcal{E}^{N}\left( \Omega \right) \subset \mathcal{E}^{M,\Gamma }\left(
\Omega \right) .$
\end{proposition}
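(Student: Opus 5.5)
We propose to take $N_p := M_{\lfloor p/\gamma \rfloor}$, suitably adjusted so that it is defined for every $p$. More precisely, let $\gamma=\gamma(\Gamma)=\max_{v\in\mathcal{V}(\Gamma)}|v|$ be the maximal order of $\Gamma$, and let $m=\min_j |v_j|\geq 1$ (the vertices lie in $\mathbb{N}^n$). Guided by the inequality $|k|\leq |\alpha|$ for $\alpha=\sum k_jv_j$, we set
\begin{equation*}
N_p:=M_{\lfloor p/\gamma\rfloor}, \qquad p\in\mathbb{Z}_+ .
\end{equation*}
We then check that $u\in\mathcal{E}^N(\Omega)$ implies (\ref{Def}).

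Fix a compact $H$ and $u\in\mathcal{E}^N(\Omega)$, so that $\|\partial^\alpha u\|_{L^\infty(H)}\le C^{|\alpha|+1}N_{|\alpha|}$ for every $\alpha$. For $k\in\mathbb{Z}_+^\sigma$ we have $\partial_\Gamma^k u=\partial^\alpha u$ with $\alpha=\sum_j k_jv_j$. Then
\begin{equation*}
|k|\,m\le|\alpha|\le\gamma|k|, \qquad \lfloor |\alpha|/\gamma\rfloor\le |k| .
\end{equation*}
Hence
\begin{equation*}
\|\partial_\Gamma^k u\|_{L^\infty(H)}\le C^{\gamma|k|+1}M_{\lfloor|\alpha|/\gamma\rfloor}.
\end{equation*}
It remains to bound $M_{q}$ by $B^{|k|+1}M_{|k|}$ for $q\le|k|$. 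This is the main obstacle, since $M$ need not be increasing.

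To handle it we use Remark \ref{H1H2}. By $(H1')$ we have $M_qM_{|k|-q}\le M_0M_{|k|}$, so $M_q\le M_0M_{|k|}/M_{|k|-q}$, and it suffices to bound $1/M_{j}$ from above by $c\,b^{j}$. Again by $(H1')$, iterating $M_1^{j}\le M_0^{j-1}M_j$ gives $M_j\ge M_0(M_1/M_0)^j$, hence $1/M_j\le M_0^{-1}(M_0/M_1)^j$. Combining these,
\begin{equation*}
M_q\le B^{|k|+1}M_{|k|}
\end{equation*}
with $B$ depending only on $M_0,M_1$. Together with the previous estimate this yields (\ref{Def}) with constant $\max(C^\gamma,B)\,C$, so $u\in\mathcal{E}^{M,\Gamma}(\Omega)$.

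Finally we record two remarks. Condition $(H2)$ is not actually needed for this inclusion, only $(H1')$; if one prefers a sequence $N$ that inherits $(H1')$ and $(H2)$, one can instead take $N_p=M_{\lceil p/m\rceil}$ or similar, and verify these properties through the equivalence $M_{p_1+\dots+p_l}\approx M_{p_1}\cdots M_{p_l}$ of Remark \ref{H1H2}. We would present the simple choice above and note this refinement.
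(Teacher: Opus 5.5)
Your proof is correct, but it follows a different route from the paper's.

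\textbf{The paper's route.} The paper takes $N=(M_p^{1/\gamma})_p$. It uses the multiplicativity $M_{p_1+\cdots+p_l}\approx M_{p_1}\cdots M_{p_l}$ of Remark~\ref{H1H2}, which needs both $(H1')$ and $(H2)$, to write $N_{|\alpha|}\approx\prod_l N_{k_l|v^l|}\approx\prod_l N_{k_l}^{|v^l|}$. It then raises each exponent $|v^l|$ to $\gamma$ and recombines by $(H1')$ into $N_{|k|}^{\gamma}=M_{|k|}$.

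\textbf{Your route.} You reindex, $N_p=M_{\lfloor p/\gamma\rfloor}$. You then need only $|\alpha|\le\gamma|k|$ and the almost-monotonicity $M_q\le\max(1,M_0/M_1)^{|k|-q}M_{|k|}$ for $q\le|k|$, which you correctly derive from $(H1')$ alone. The paper's step $N_{k_l}^{|v^l|}\preceq N_{k_l}^{\gamma}$ silently relies on the same lower bound $M_j\ge M_0(M_1/M_0)^j$.

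\textbf{What each buys.} Your argument is shorter and shows that $(H2)$ is superfluous for the bare existence claim. The paper's argument gives the explicit power form $M^{1/\gamma}$ recorded in the subsequent Remark. The two are reconciled as follows. By $(H1')$ and almost-monotonicity, $M_{\lfloor p/\gamma\rfloor}^{\gamma}\le M_0^{\gamma-1}M_{\gamma\lfloor p/\gamma\rfloor}\le c\,b^{p}M_p$. Conversely, $(H2)$ gives $M_p\le c\,b^{p}M_{\lfloor p/\gamma\rfloor}^{\gamma}$. So under the paper's hypotheses your $N$ defines the same class as $M^{1/\gamma}$, and you recover the Remark as well. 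Without $(H2)$ you only get $N\prec M^{1/\gamma}$, and this is exactly where $(H2)$ enters the paper's stronger conclusion.

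\textbf{Two small corrections.}

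First, your closing suggestion $N_p=M_{\lceil p/m\rceil}$ does not work. Since $|\alpha|$ can be as large as $\gamma|k|$, the verification would require $M_{\lceil\gamma|k|/m\rceil}\le C B^{|k|}M_{|k|}$. This is false for Gevrey sequences whenever $\gamma>m$; for instance, take $\Gamma$ with nonzero vertices $(2,0),(0,1)$, $M_p=p!$, and $k=(k_1,0)$, so that $\partial_\Gamma^k=\partial_1^{2k_1}$. The suggestion is also unnecessary, given the equivalence above.

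Second, your final constant should be a product such as $C^{\gamma+1}B$ (with $C,B\ge1$), not $\max(C^{\gamma},B)\,C$.
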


\begin{proof}
Let $\mathcal{V}\left( \Gamma \right) =\left\{ v^{1},...,v^{\sigma }\right\}
$ be the finite set of vertices of $\Gamma $ and $%
v^{j}=(v_{1}^{j},...,v_{n}^{j})\in \mathbb{R}_{+}^{n}.$\ Let $u\in \mathcal{E%
}\left( \Omega \right) ,$\ then for every $k=(k_{1},...,k_{\sigma })\in
\mathbb{Z}_{+}^{\sigma },$ we have%
\begin{equation*}
\partial _{\Gamma }^{k}u=\partial ^{\alpha }u,
\end{equation*}%
where $\alpha =(\alpha _{1},...,\alpha _{n})\in \mathbb{Z}_{+}^{n}$\ is such
that%
\begin{equation*}
\alpha _{j}=\sum\limits_{l=1}^{\sigma }k_{l}v_{j}^{l},j=1,...,n.
\end{equation*}%
So, if $u\in \mathcal{E}^{N}\left( \Omega \right) ,$ then for every $K$
compact of $\Omega $ there exists $C>0$\ such that%
\begin{equation*}
\left\Vert \partial _{\Gamma }^{k}u\right\Vert _{L^{\infty }\left( K\right)
}=\left\Vert \partial ^{\alpha }u\right\Vert _{L^{\infty }\left( K\right)
}\leq C^{\left\vert \alpha \right\vert +1}N_{\left\vert \alpha \right\vert },
\end{equation*}%
where%
\begin{equation*}
\left\vert \alpha \right\vert =\sum\limits_{l=1}^{\sigma }k_{l}\left\vert
v^{l}\right\vert
\end{equation*}%
Due to the properties $\left( H1^{\prime }\right) $ and $\left( H2\right) ,$%
\ see Remark (\ref{H1H2}), we have
\begin{equation*}
N_{\left\vert \alpha \right\vert }\approx \prod\limits_{l=1}^{\sigma
}N_{k_{l}\left\vert v^{l}\right\vert }\approx \prod\limits_{l=1}^{\sigma
}(N_{k_{l}})^{\left\vert v^{l}\right\vert }\preceq
\prod\limits_{l=1}^{\sigma }(N_{k_{l}})^{\gamma }\preceq (N_{\left\vert
k\right\vert })^{\gamma },
\end{equation*}%
where $\gamma =\max_{l}\left\vert v^{l}\right\vert >0$ is the maximal order
of $\Gamma .$\ Defining $N=\left( M_{p}^{\frac{1}{\gamma }}\right)
_{p=0}^{\infty }$\ we obtain the inclusion $\mathcal{E}^{N}\left( \Omega
\right) \subset \mathcal{E}^{M,\Gamma }\left( \Omega \right) .$\
\end{proof}

\begin{remark}
We have $\mathcal{E}^{M^{\frac{1}{\gamma }}}\left( \Omega \right) \subset
\mathcal{E}^{M,\Gamma }\left( \Omega \right) ,$ where $\gamma $ is the
maximal order of $\Gamma ,$\ under the condition that the sequence $M$\
satisfies the conditions $\left( H1^{\prime }\right) $ and $\left( H2\right)
.$
\end{remark}

\begin{corollary}
Let the sequence $M=\left( M_{p}\right) _{p=0}^{\infty }$ satisfying $\left(
H1^{\prime }\right) ,\left( H2\right) ,$ and the sequence $\left( M_{p}^{%
\frac{1}{\gamma }}\right) _{p=0}^{\infty }$ satisfies $\left( H3^{\prime
}\right) ,$ then the space $\mathcal{E}^{M,\Gamma }\left( \Omega \right) $
is non-quasianalytic.\ \ \
\end{corollary}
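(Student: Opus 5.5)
The plan is to read ``non-quasianalytic'' as meaning that $\mathcal{D}^{M,\Gamma}(\Omega):=\mathcal{E}^{M,\Gamma}(\Omega)\cap\mathcal{D}(\Omega)$ is not reduced to $\{0\}$. Equivalently, for every compact $K\subset\Omega$ we want a nonzero function of $\mathcal{E}^{M,\Gamma}$ supported near $K$. The idea is to import such functions from the Denjoy--Carleman class of the sequence $N=(M_p^{1/\gamma})_p$, where $\gamma=\gamma(\Gamma)$ is the maximal order of $\Gamma$, and then use the inclusion from the preceding Proposition and Remark.

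First, by hypothesis $N$ satisfies $(H3')$. By the last part of the Proposition on $\mathcal{E}^{M}(\Omega)$ recalled in Section 2, this gives $\mathcal{D}^{N}(\Omega)\neq\{0\}$. Indeed, for any $x_0\in\Omega$ and any small ball $B\subset\Omega$ around it, the Denjoy--Carleman theorem provides $\varphi\in\mathcal{D}^N(B)$ with $\varphi\neq0$.

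Before using that proposition, I would check that $N$ is a legitimate weight. If the nontriviality criterion is used in its usual form, it needs logarithmic convexity or at least $(H1')$. Since $M$ satisfies $(H1')$, taking $\gamma$-th roots gives
\begin{equation*}
N_pN_q=(M_pM_q)^{1/\gamma}\le (M_0M_{p+q})^{1/\gamma}=N_0N_{p+q},
\end{equation*}
so $(H1')$ passes to $N$. Likewise $(H2)$ passes to $N$, with constants $A^{1/\gamma}$ and $H^{1/\gamma}$. If the cited equivalence truly requires $(H1)$, I would replace $N$ by its log-convex minorant. Under $(H3')$ this minorant still gives nonzero compactly supported functions, and membership in the smaller class implies membership in $\mathcal{E}^N$.

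Second, I would apply the Remark following the preceding Proposition, $\mathcal{E}^{M^{1/\gamma}}(\Omega)\subset\mathcal{E}^{M,\Gamma}(\Omega)$, which is valid because $M$ satisfies $(H1')$ and $(H2)$. Then $\varphi\in\mathcal{E}^N(\Omega)\cap\mathcal{D}(\Omega)\subset\mathcal{E}^{M,\Gamma}(\Omega)\cap\mathcal{D}(\Omega)$, and $\varphi$ is nonzero with support in an arbitrarily small neighbourhood. It follows that $\mathcal{E}^{M,\Gamma}(\Omega)$ contains nontrivial cutoff functions, and hence that it is non-quasianalytic: the local behaviour of a function does not determine it, since $\varphi$ vanishes on an open set without being identically zero.

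The main obstacle is the first step, namely making sure that the hypotheses under which the Denjoy--Carleman nontriviality of $\mathcal{D}^N$ is quoted are actually met by $N=M^{1/\gamma}$. The inclusion step is simply the preceding Proposition, and the rest is bookkeeping.
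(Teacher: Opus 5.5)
Your proposal is correct and follows the route the paper intends; the corollary is stated without proof immediately after the Remark $\mathcal{E}^{M^{1/\gamma}}(\Omega)\subset\mathcal{E}^{M,\Gamma}(\Omega)$, and the argument is exactly yours: nontrivial elements of $\mathcal{D}^{M^{1/\gamma}}(\Omega)$, supplied by $(H3')$ via the Denjoy--Carleman criterion, are carried into $\mathcal{E}^{M,\Gamma}(\Omega)\cap\mathcal{D}(\Omega)$ by that inclusion. Your checks that $(H1')$ and $(H2)$ pass to $N=M^{1/\gamma}$ are right, and your log-convex-minorant fallback is also sound, because on each linear piece of the minorant $N^{c}$ Jensen's inequality for $x\mapsto e^{-x}$ gives $\sum_p N^{c}_{p-1}/N^{c}_{p}\le\sum_p N_{p-1}/N_{p}<\infty$.
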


\begin{definition}
A linear $\Gamma $-differential operator $P$ of order $m\in \mathbb{N}$ with
constant coefficients $(a_{k})_{k}\in \mathbb{C}$ is defined as the operator%
\begin{equation*}
P=\sum\limits_{\left\vert k\right\vert \leq m}a_{k}\partial _{\Gamma }^{k}
\end{equation*}
\end{definition}

It is easy to prove the following result.

\begin{proposition}
The space $\mathcal{E}^{M,\Gamma }\left( \Omega \right) $ is a vector space
stable under action of linear $\Gamma $-differential operators.
\end{proposition}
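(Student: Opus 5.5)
The vector space part is immediate. If $u,w\in \mathcal{E}^{M,\Gamma }\left( \Omega \right) $, $\lambda \in \mathbb{C}$, and $H$ is a compact set, take the constants $C_{u},C_{w}$ given by (\ref{Def}) on $H$. Then
\begin{equation*}
\left\Vert \partial _{\Gamma }^{k}(u+\lambda w)\right\Vert _{L^{\infty }(H)}\leq (1+\left\vert \lambda \right\vert )\max (C_{u},C_{w})^{\left\vert k\right\vert +1}M_{\left\vert k\right\vert },
\end{equation*}
and the factor $(1+\left\vert \lambda \right\vert )$ is absorbed by enlarging the constant to $C=(1+\left\vert \lambda \right\vert )\max (C_{u},C_{w})$.

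For the stability, linearity reduces the problem to a single monomial $\partial _{\Gamma }^{j}$ with $\left\vert j\right\vert \leq m$; finite sums and constant coefficients are handled as above. The key observation is that all the operators $\partial ^{v_{l}}$ have constant coefficients and therefore commute. Hence $\partial _{\Gamma }^{k}\partial _{\Gamma }^{j}u=\partial _{\Gamma }^{k+j}u$, which gives
\begin{equation*}
\left\Vert \partial _{\Gamma }^{k}\partial _{\Gamma }^{j}u\right\Vert _{L^{\infty }(H)}\leq C^{\left\vert k\right\vert +\left\vert j\right\vert +1}M_{\left\vert k\right\vert +\left\vert j\right\vert }.
\end{equation*}

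It then remains to bound $M_{\left\vert k\right\vert +\left\vert j\right\vert }$ by $C_{1}^{\left\vert k\right\vert +1}M_{\left\vert k\right\vert }$, with $C_{1}$ depending only on $\left\vert j\right\vert \leq m$. This is the one genuine step, and it needs a growth hypothesis on $M$ that the statement leaves implicit. The natural choice is $\left( H2^{\prime }\right) $, as in the Denjoy--Carleman Proposition, or $\left( H2\right) $, which implies it. Iterating $\left( H2^{\prime }\right) $ $\left\vert j\right\vert $ times gives
\begin{equation*}
M_{p+\left\vert j\right\vert }\leq A^{\left\vert j\right\vert }H^{\left\vert j\right\vert p+\left\vert j\right\vert ^{2}}M_{p}.
\end{equation*}
Taking $p=\left\vert k\right\vert $ and $\left\vert j\right\vert \leq m$, the bound becomes $\left( C^{\prime }\right) ^{\left\vert k\right\vert +1}M_{\left\vert k\right\vert }$ with $C^{\prime }=C^{m+1}A^{m}H^{m+m^{2}}$ (taking $C,H\geq 1$). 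Alternatively, $\left( H2\right) $ gives directly $M_{p+\left\vert j\right\vert }\leq AH^{p+\left\vert j\right\vert }M_{p}M_{\left\vert j\right\vert }$, and $M_{\left\vert j\right\vert }$ is bounded over the finitely many $\left\vert j\right\vert \leq m$.

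I expect the only obstacle to be making this hypothesis explicit. Without some condition of type $\left( H2^{\prime }\right) $ the statement can fail for wildly growing $M$, so I would state the proposition under $\left( H2^{\prime }\right) $, as the earlier results in the paper do. Once that is done the argument above is a finite computation.
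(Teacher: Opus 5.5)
Your argument is correct, and it is the routine verification the paper has in mind when it calls the result easy to prove (the paper gives no proof). It uses $\partial_{\Gamma}^{k}\partial_{\Gamma}^{j}=\partial_{\Gamma}^{k+j}$ by commutativity, and absorbs the index shift $M_{|k|+|j|}$ into $C_{1}^{|k|+1}M_{|k|}$ via $(H2^{\prime})$ or $(H2)$, after enlarging $A,H,C$ to be at least $1$.

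You are also right that such a hypothesis is genuinely needed and is missing from the paper's statement. For $\Gamma=I$ one has $\mathcal{E}^{M,I}=\mathcal{E}^{M}$. Take a log-convex $M$ violating $(H2^{\prime})$, e.g. $M_{p}=e^{p^{3}}$, and set $f(x)=\sum_{k\geq 1}M_{k}(2\mu_{k})^{-k}e^{2i\mu_{k}x}$ with $\mu_{k}=M_{k}/M_{k-1}$. This $f$ lies in $\mathcal{E}^{M}(\mathbb{R})$ but satisfies $|f^{(k)}(0)|\geq M_{k}$ for $k\geq 1$, so $f^{\prime}\notin\mathcal{E}^{M}(\mathbb{R})$.
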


We have a more general result, see \cite{Bouz} for the proof.

\begin{definition}
A $\Gamma $-ultradifferential operator of type $M$ is a differential
operator of infinte order $\sum\limits_{\gamma }a_{\gamma }\partial _{\Gamma
}^{\gamma }$ satisfying for every $h>0$ there exists $c>0$ such that $%
\forall \gamma \in \mathbb{Z}_{+}^{\sigma }$ we have%
\begin{equation*}
\left\vert a_{\gamma }\right\vert M_{\left\vert \gamma \right\vert }\leq
ch^{\left\vert \gamma \right\vert }
\end{equation*}
\end{definition}

\begin{proposition}
The space $\mathcal{E}^{M,\Gamma }\left( \Omega \right) $ is stable under
the action of $\ \Gamma $-ultradifferential operator of type $M.$
\end{proposition}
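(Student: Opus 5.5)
The approach mirrors the classical proof that $\mathcal{E}^{M}(\Omega)$ is stable under $M$-ultradifferential operators: estimate $\partial_\Gamma^{l}Pu$ termwise and sum the resulting series. I assume $M$ satisfies $(H1')$ and $(H2)$, as in Remark \ref{H1H2}. Let $u\in\mathcal{E}^{M,\Gamma}(\Omega)$, let $P=\sum_{\gamma}a_\gamma\partial_\Gamma^\gamma$ be a $\Gamma$-ultradifferential operator of type $M$, and fix a compact $H\subset\Omega$. By (\ref{Def}) there is $C>0$ with $\|\partial_\Gamma^{k}u\|_{L^\infty(H)}\le C^{|k|+1}M_{|k|}$ for all $k\in\mathbb{Z}_+^\sigma$.

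First, the operators $\partial^{v_j}$ have constant coefficients and commute, so $\partial_\Gamma^{l}\partial_\Gamma^{\gamma}=\partial_\Gamma^{l+\gamma}$. Hence, formally,
\begin{equation*}
\partial_\Gamma^{l}Pu=\sum_{\gamma}a_\gamma\,\partial_\Gamma^{l+\gamma}u .
\end{equation*}
Second, each term is estimated using the definition of the space and then $(H2)$:
\begin{equation*}
|a_\gamma|\,\|\partial_\Gamma^{l+\gamma}u\|_{L^\infty(H)}\le |a_\gamma|\,C^{|l|+|\gamma|+1}M_{|l|+|\gamma|}\le A\,|a_\gamma|M_{|\gamma|}\,C^{|l|+|\gamma|+1}H_0^{|l|+|\gamma|}M_{|l|}.
\end{equation*}
Here $H_0$ is the constant $H$ appearing in $(H2)$. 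Third, we apply the defining property of $P$ with $h=(2CH_0)^{-1}$. This gives $c>0$ such that the bound becomes
\begin{equation*}
A\,c\,C\,(CH_0)^{|l|}M_{|l|}\,2^{-|\gamma|}.
\end{equation*}
Since $\sum_{\gamma\in\mathbb{Z}_+^\sigma}2^{-|\gamma|}=2^{\sigma}<\infty$, the series converges uniformly on $H$. The total is bounded by $C'^{|l|+1}M_{|l|}$ for a suitable $C'$.

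The main obstacle is justifying that $Pu$ is a well-defined smooth function on $\Omega$ and that $\partial_\Gamma^{l}$ may be applied termwise. Estimating $\partial_\Gamma^{l}$ alone is not enough, because smoothness requires controlling all $\partial^{\beta}$. I plan to handle this as follows. Since $\Gamma$ is regular, it has a vertex $m_je_j$ on each coordinate axis. Each $\partial^\beta$ with $\beta_j\le m_j$ is then one of finitely many fixed derivatives, and every $\partial^\beta$ factors as $\partial_\Gamma^{k}\partial^{\beta'}$ with $\beta'$ in this finite set. The function $\partial^{\beta'}u$ is not obviously in $\mathcal{E}^{M,\Gamma}$, so I use the equivalent characterization via $\Gamma$-structure, in the spirit of Proposition \ref{Char1}. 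For $\beta'$ in the finite set, $\partial^{\beta'}$ is dominated by a bounded number of extra $\Gamma$-derivatives: take $\beta'\le\sum_j m_je_j$ and pass to a slightly larger compact using standard interpolation, or simply use $(H2')$ to absorb a bounded shift of $|k|$. This gives uniform convergence of all derivatives of the series on $H$, so $Pu\in\mathcal{E}(\Omega)$. The estimate above then yields $Pu\in\mathcal{E}^{M,\Gamma}(\Omega)$, with linearity clear.

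Finally, $P$ commutes with restrictions. This gives the sheaf-homomorphism statement, as in the Denjoy–Carleman case.
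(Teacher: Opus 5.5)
The paper does not prove this proposition; it only says ``see \cite{Bouz} for the proof'', so there is no argument to compare yours against. Your argument is the standard Denjoy--Carleman proof carried over to $\Gamma$-derivatives, and it is correct once $M$ satisfies (H2). The statement needs (H2) but does not say so, and you are right to state it explicitly; (H1') is never used. The termwise bound, the choice $h=(2CH_0)^{-1}$ and the sum $\sum_{\gamma}2^{-|\gamma|}=2^{\sigma}$ are all fine.

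The step to tighten is the smoothness of $Pu$. Only the first of your two options works. (H2') alone cannot help: if $\beta'$ is not in the semigroup generated by the vertices, then $\partial^{\beta'}\partial_\Gamma^{k}u$ is not a $\Gamma$-derivative of $u$, and the definition gives no bound on it.

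What you need is the one-variable inequality
\[
|g^{(r)}(0)|\le C_{m,\delta}\bigl(\sup_{[-\delta,\delta]}|g|+\sup_{[-\delta,\delta]}|g^{(m)}|\bigr),\qquad 0\le r\le m .
\]
Apply it successively in $x_1,\dots,x_n$, with $m=m_j$ in the $j$-th variable, on nested compact neighbourhoods $H\subset H_1\subset\dots\subset H_n\subset\Omega$, to $f=\partial_\Gamma^{k+\gamma}u$. This gives
\[
\|\partial^{\beta'}\partial_\Gamma^{k+\gamma}u\|_{L^\infty(H)}\le C_1\sum_{\epsilon\in\{0,1\}^n}\|\partial_1^{\epsilon_1m_1}\cdots\partial_n^{\epsilon_nm_n}\partial_\Gamma^{k+\gamma}u\|_{L^\infty(H_n)} .
\]
Each term on the right is a $\Gamma$-derivative of order at most $|k|+|\gamma|+n$, because the $m_je_j$ are vertices. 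Only at this point does (H2) absorb the shift by $n$. The same choice of small $h$ then makes $\sum_\gamma a_\gamma\partial^{\beta}\partial_\Gamma^{\gamma}u$ converge uniformly on $H$ for every $\beta$, which justifies both $Pu\in\mathcal{E}(\Omega)$ and termwise differentiation.

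Drop the appeal to Proposition \ref{Char1}. It is stated only for Gevrey sequences with $s\ge 1$, and it plays no role once the interpolation is in place.
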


The following diagram summarizes the development of the spaces of
ultradifferentiable functions,
\begin{equation*}
\begin{tabular}{lllllll}
&  &  &  & $\mathcal{E}^{M,I}\left( \Omega \right) $ &  &  \\
&  &  & $\nearrow $ &  & $\searrow $ &  \\
$\mathcal{A}(\Omega )$ & $\rightarrow $ & $\mathcal{E}^{s,I}(\Omega
)\rightarrow \mathcal{E}^{s,\left( l_{1},..,l_{n}\right) }\left( \Omega
\right) $ &  &  &  & $\mathcal{E}^{M,\Gamma }\left( \Omega \right) $ \\
&  &  & $\searrow $ &  & $\nearrow $ &  \\
&  &  &  & $\mathcal{E}^{s,\Gamma }(\Omega )$ &  &
\end{tabular}%
\end{equation*}

\section{Real Paley-Wiener Theorem}

Recall that with a regular Newton polyhedron $\Gamma $ is associated its
elements $\mathcal{A}\left( \Gamma \right) ,\mu \left( \Gamma \right) $ and $%
\mathcal{V}\left( \Gamma \right) =\left( v_{j}\right) _{j=1}^{\sigma }$ the
set of its vertices.

\begin{definition}
The weight function associated with $\Gamma $ is defined and denoted as the
positive function%
\begin{equation*}
\left\vert \xi \right\vert _{\Gamma }:=\left( \sum\limits_{v\in \mathcal{V}%
\left( \Gamma \right) }\left\vert \xi ^{v}\right\vert ^{2}\right) ^{\frac{1}{%
2\mu }},\xi \in \mathbb{R}^{n}.
\end{equation*}
\end{definition}

\begin{example}
In the anisotropic case, as particular case, and $l=\left(
l_{1},...,l_{n}\right) \in \mathbb{R}_{+}^{n},$ we have%
\begin{equation*}
\left\vert \xi \right\vert _{l}:=\left( \sum\limits_{j=1}^{n}\left\vert \xi
_{j}\right\vert ^{\frac{2}{l_{j}}}\right) ^{\frac{1}{2}},\xi \in \mathbb{R}%
^{n}.
\end{equation*}
\end{example}

First, we need the following result.

\begin{lemma}
Let $\Gamma $ and $\Gamma ^{\prime }$\ be two regular Newton polyhedrons,
then $\Gamma ^{\prime }\subset \Gamma $\ if and only if there exists $c>0$\
such that%
\begin{equation*}
\left\vert \xi \right\vert _{\Gamma ^{\prime }}^{\mu ^{\prime }}\leq
c\left\vert \xi \right\vert _{\Gamma }^{\mu },\forall \xi \in \mathbb{R}^{n}.
\end{equation*}
\end{lemma}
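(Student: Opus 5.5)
Note that $|\xi|_\Gamma^{\mu}=\bigl(\sum_{v\in\mathcal V(\Gamma)}|\xi^v|^2\bigr)^{1/2}$ is comparable, up to constants depending only on the number of vertices, to $\max_{v\in\mathcal V(\Gamma)}|\xi^v|$. The lemma therefore reduces to the following statement: $\Gamma'\subset\Gamma$ if and only if $\max_{w\in\mathcal V(\Gamma')}|\xi^w|\le c\max_{v\in\mathcal V(\Gamma)}|\xi^v|$ for all $\xi\in\mathbb R^n$. Since $|\xi^v|$ depends only on $(|\xi_1|,\dots,|\xi_n|)$, we may assume $\xi_j\ge 0$. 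The vertex $v^0=0$ contributes the constant $1$, so both sides are at least $1$.

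\textbf{Necessity.} Assume $\Gamma'\subset\Gamma$ and let $w$ be a vertex of $\Gamma'$. Then $w\in\Gamma$, so $w=\sum_{v}\lambda_v v$ is a convex combination of vertices of $\Gamma$ (including $0$), with $\lambda_v\ge0$ and $\sum\lambda_v=1$. For $\xi_j>0$ the weighted AM--GM inequality gives
\begin{equation*}
\xi^w=\prod_v(\xi^v)^{\lambda_v}\le\sum_v\lambda_v\xi^v\le\max_v\xi^v .
\end{equation*}
If some $\xi_j=0$, the inequality follows by continuity, or directly because both sides are monomials with nonnegative exponents. Hence we may take $c=1$ up to the constant from the norm comparison. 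This direction is routine.

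\textbf{Sufficiency.} Assume the inequality holds and suppose, for contradiction, that some vertex $w$ of $\Gamma'$ lies outside $\Gamma$. By the representation $\Gamma=\bigcap_{q\in\mathcal A(\Gamma)}\{\alpha\in\overline{\mathbb R^n_+}:\alpha\cdot q\le1\}$, there is $q\in\mathcal A(\Gamma)$ with $w\cdot q>1$. Every $v\in\Gamma$ satisfies $v\cdot q\le1$. Now test along the curve $\xi_j=t^{q_j}$ with $t\to\infty$, which is admissible because regularity gives $q_j>0$. Then $\xi^w=t^{w\cdot q}$, while $\max_v\xi^v=t^{\max_v v\cdot q}\le t$. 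The assumed inequality would give $t^{w\cdot q}\le c\,t$ for all large $t$, which contradicts $w\cdot q>1$. Hence every vertex of $\Gamma'$ lies in $\Gamma$, and by convexity $\Gamma'\subset\Gamma$.

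The main point to check is this separation step: the defining half-spaces with strictly positive normals $q$ must be available, so that the test curve stays in $\mathbb R^n_+$. This is exactly where the regularity of $\Gamma$ is used. The case of a possibly non-regular $\Gamma'$ causes no difficulty, since only $\Gamma$'s facet normals enter the argument.
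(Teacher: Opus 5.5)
Your proof is correct and follows the paper's route. Necessity comes from bounding each monomial $\xi^{w}$, $w\in\Gamma'\subset\Gamma$, by $|\xi|_\Gamma^{\mu}$: the paper calls this clear via the gauge $k(\alpha,\Gamma)$, and you justify it with weighted AM--GM. Sufficiency is a contradiction along the curve $\xi_j=t^{q_j}$, $t\to+\infty$, with $q$ a positive normal separating a point of $\Gamma'$ from $\Gamma$.

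Your sufficiency step is cleaner than the paper's. The paper misstates the separation inequality and writes the estimate for $\Gamma$ with exponent $k(\alpha_0,\Gamma)$ instead of invoking the hypothesis. One small correction: $t^{q_j}>0$ for any real $q_j$, so regularity of $\Gamma$ is not what makes your test curve admissible.
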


\begin{proof}
It is clear to see the following%
\begin{equation*}
\forall \alpha \in \mathbb{R}_{+}^{n}\diagdown \left\{ 0\right\} ,\frac{%
\alpha }{k(\alpha ,\Gamma )}\in \Gamma ,
\end{equation*}%
in particular $\alpha \in \Gamma \Leftrightarrow k(\alpha ,\Gamma )\leq 1,$\
giving that%
\begin{equation*}
\forall \alpha \in \mathbb{R}_{+}^{n},\exists c>0,\forall \xi \in \mathbb{R}%
^{n},\left\vert \xi ^{\alpha }\right\vert \leq c\left\vert \xi \right\vert
_{\Gamma }^{\mu k(\alpha ,\Gamma )}
\end{equation*}%
Consequently, if $\Gamma ^{\prime }\subset \Gamma $\ there exists $c>0$\
such that $\left\vert \xi \right\vert _{\Gamma ^{\prime }}^{\mu ^{\prime
}}\leq c\left\vert \xi \right\vert _{\Gamma }^{\mu },\forall \xi \in \mathbb{%
R}^{n}.$

Conversely, suppose $\exists c>0,\forall \xi \in \mathbb{R}^{n},\left\vert
\xi \right\vert _{\Gamma ^{\prime }}^{\mu ^{\prime }}\leq c\left\vert \xi
\right\vert _{\Gamma }^{\mu },$ and there exists $\alpha _{0}\in \Gamma
^{\prime }$\ such that $\alpha _{0}\notin \Gamma .$\ Using the separation
theorem of convexes due to the regularity of the Newton polyhedron $\Gamma ,$
there exists $q\in \mathbb{R}_{+}^{n}$\ such that $\alpha \cdot q<0,\forall
\alpha \in \Gamma ,$\ and $\alpha _{0}\cdot q>0.$ Taking a vector $\eta \in
\mathbb{R}_{+}^{n},\eta _{1}\times ...\times \eta _{n}\neq 0,$\ as we have%
\begin{equation*}
\forall t>0,\left\vert (t^{q}\eta )^{\alpha _{0}}\right\vert \leq
c\left\vert t^{q}\eta \right\vert _{\Gamma }^{\mu k(\alpha _{0},\Gamma )},
\end{equation*}%
i.e. $\forall t>0,$%
\begin{equation*}
t^{q\cdot \alpha _{0}}\left\vert \eta \right\vert \leq c\left(
\sum\limits_{v\in \mathcal{V}\left( \Gamma \right) }t^{2q\cdot v}\left\vert
\eta ^{v}\right\vert ^{2}\right) ^{\frac{k(\alpha _{0},\Gamma )}{2}}
\end{equation*}%
and let $t\rightarrow +\infty ,$\ we obtain a contradiction.
\end{proof}

We recall an important element associated with a sequence $(M_{p})_{p\in
\mathbb{Z}_{+}}.$

\begin{definition}
The associated function of a sequence $(M_{p})_{p\in \mathbb{Z}_{+}}$ is
defined as the function
\begin{equation*}
M(t):=\sup_{p}\log \frac{t^{p}}{M_{p}},t>0.
\end{equation*}
\end{definition}

\begin{example}
The Gevrey case defined by the sequence $(p!)^{s}$ has its associated
function equivalent to the function $t^{\frac{1}{s}},t>0.$
\end{example}

We present the following Paley-Wiener type Theorem for multi-anisotropic
ultradifferentiable functions.

\begin{theorem}
Let $u\in \mathcal{E}^{\prime },$ then the following assertions are
equivalent

(i) $u\in \mathcal{E}^{M,\Gamma }(\Omega ).$

(ii) there exist $C>0$\ and $a>0$ such that%
\begin{equation*}
\left\vert \widehat{u}\left( \zeta \right) \right\vert \leq Ce^{\left(
H_{K}(\Im\zeta )-M(a\left\vert \zeta \right\vert _{\Gamma })\right) },\zeta
\in \mathbb{C}^{n},
\end{equation*}%
where $H_{K}(\Im\zeta ):=\sup_{x\in K}\left\vert \Im\zeta \cdot x\right\vert
$ and $K=suppu.$
\end{theorem}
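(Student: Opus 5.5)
We follow the classical Paley--Wiener--Schwartz scheme for Denjoy--Carleman classes (as in H\"{o}rmander's treatment of $\mathcal{E}^{M}$), replacing ordinary derivatives by the mixed $\Gamma$-derivatives $\partial_{\Gamma}^{k}$. The single bridge between the two sides is the identity
\begin{equation*}
\widehat{\partial_{\Gamma}^{k}u}(\zeta)=\prod_{j=1}^{\sigma}(i\zeta)^{k_{j}v_{j}}\,\widehat{u}(\zeta),
\end{equation*}
together with a comparison between the monomials $\prod_{j}|\zeta^{v_{j}}|^{k_{j}}$ and powers of $|\zeta|_{\Gamma}^{\mu}$. Since $u\in\mathcal{E}'$, we read (i) as saying that $u$ is a compactly supported element of $\mathcal{E}^{M,\Gamma}$. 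Because the associated function $M(t)$ involves $t^{p}/M_{p}$, we work with $|\zeta|_{\Gamma}^{\mu}$ raised to the power $p=|k|$. In practice this means either that $M$ is tacitly indexed so that $|\zeta|_\Gamma$ enters through $|\zeta|_{\Gamma}^{\mu}$, or that we absorb the factor $\mu$ into the weight. We would fix this normalization at the outset, reading $M(a|\zeta|_\Gamma)$ as $\sup_{p}\log\bigl((a|\zeta|_{\Gamma}^{\mu})^{p}/M_{p}\bigr)$ up to constants.

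\textbf{(i)$\Rightarrow$(ii).} Set $K=\mathrm{supp}\,u$. Integrating by parts in $\widehat{\partial_{\Gamma}^{k}u}(\zeta)=\int e^{-ix\cdot\zeta}\partial_{\Gamma}^{k}u(x)\,dx$ gives
\begin{equation*}
\Bigl|\prod_{j}\zeta^{k_{j}v_{j}}\Bigr|\,|\widehat{u}(\zeta)|\le |K|\,e^{H_{K}(\Im\zeta)}\,C^{|k|+1}M_{|k|}.
\end{equation*}
We then choose, for each $p$, the multi-index $k$ with $|k|=p$ that makes the monomial largest. The key pointwise inequality is
\begin{equation*}
\Bigl(\sum_{v\in\mathcal{V}(\Gamma)}|\zeta^{v}|^{2}\Bigr)^{p/2}\le \sigma^{p}\max_{|k|=p}\prod_{j}|\zeta^{v_{j}}|^{k_{j}},
\end{equation*}
which is trivial: take $k=pe_{j_{0}}$ with $j_{0}$ the index maximizing $|\zeta^{v_{j}}|$. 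For complex $\zeta$ we apply this to $\Re\zeta$, after first absorbing the imaginary part. This absorption works because on the region $|\Im\zeta|\lesssim$ the relevant scale the factor $e^{H_K}$ dominates, and we may restrict to real $\xi$ up to shifting constants, exactly as in the classical proof. Consequently $(|\zeta|_{\Gamma}^{\mu}/(\sigma C))^{p}|\widehat{u}|\le C'e^{H_{K}}M_{p}$ for every $p$. Taking the infimum over $p$ gives (ii) with $a=1/(\sigma C)$.

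\textbf{(ii)$\Rightarrow$(i).} Restricting to real $\xi$, the bound gives $|\widehat{u}(\xi)|\le Ce^{-M(a|\xi|_{\Gamma})}$. We then estimate $\partial_{\Gamma}^{k}u$ by Fourier inversion:
\begin{equation*}
|\partial_{\Gamma}^{k}u(x)|\le (2\pi)^{-n}\int\prod_{j}|\xi^{v_{j}}|^{k_{j}}\,|\widehat{u}(\xi)|\,d\xi,\qquad \prod_{j}|\xi^{v_{j}}|^{k_{j}}\le |\xi|_{\Gamma}^{\mu|k|}.
\end{equation*}
Using $e^{-M(t)}\le M_{q}t^{-q}$ with $q=|k|+N$, we obtain
\begin{equation*}
|\partial_{\Gamma}^{k}u(x)|\le C a^{-|k|-N}M_{|k|+N}\int(1+|\xi|_{\Gamma})^{-\mu N}\,d\xi.
\end{equation*}
The integral converges for a fixed $N$ because $|\xi|_{\Gamma}^{\mu}\ge c(1+|\xi|)^{\min_j|v_j|}-c'$, using that $\Gamma$ has vertices on all axes by regularity. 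Finally, (H2) gives $M_{|k|+N}\le AH^{|k|+N}M_{|k|}M_{N}$, so $|\partial_{\Gamma}^{k}u|\le C''(H/a)^{|k|}M_{|k|}$ uniformly on $\mathbb{R}^{n}$, which is (i).

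\textbf{Main obstacle.} We expect the hardest step to be the complex part of (i)$\Rightarrow$(ii). The real-variable estimate and the reverse direction are routine, but passing from real to complex $\zeta$ requires controlling $|\zeta|_{\Gamma}$ against $|\Re\zeta|_{\Gamma}$ when $|\Im\zeta|$ is large. We would try the standard device of splitting into the region $|\Im\zeta|\le\varepsilon|\Re\zeta|$, where the weights are comparable, and its complement. On the complement we use $H_{K}(\Im\zeta)$, together with $|\zeta|_{\Gamma}\lesssim1+|\zeta|^{\max_j|v_j|/\mu}$ and the subadditivity $M(t+s)\le M(2t)+M(2s)$. We would also pin down the $\mu$ normalization in the definition of $|\xi|_{\Gamma}$ versus $M(\cdot)$ carefully, since the statement as written leaves it implicit.
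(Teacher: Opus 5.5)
Your direction (i)$\Rightarrow$(ii) is the paper's argument. You take the Fourier transform of $\partial_\Gamma^k u$, bound $\prod_j|\zeta^{v_j}|^{k_j}|\widehat{u}(\zeta)|$ by $e^{H_K(\Im\zeta)}\|\partial_\Gamma^k u\|_{L^1}$, and then choose $k$ with $|k|=p$ so that the monomial dominates $|\zeta|_\Gamma^{\mu p}$.

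The ``main obstacle'' you describe does not exist. Read $|\zeta^{v}|$ as $\prod_i|\zeta_i|^{v_i}$. Then the identity $\widehat{\partial_\Gamma^k u}(\zeta)=\prod_j(i\zeta)^{k_jv_j}\widehat{u}(\zeta)$ and your choice $k=pe_{j_0}$ both work verbatim for complex $\zeta$. Your own displays therefore already give (ii) with the full complex weight $|\zeta|_\Gamma$, which is essentially how the paper proceeds. A version with $|\Re\zeta|_\Gamma$ would follow from this by monotonicity of $M$, since $|\Re\zeta|_\Gamma\le|\zeta|_\Gamma$.

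Drop the reduction to $\Re\zeta$ and the region splitting. They are unnecessary, and the stated justification (that $e^{H_K}$ ``dominates'') supplies no decay, so it would not produce the factor $e^{-M(a|\zeta|_\Gamma)}$ where $|\Im\zeta|$ is large.

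One small repair is needed. Since $v^0=0\in\mathcal{V}(\Gamma)$, you have $|\zeta|_\Gamma\ge 1$. Your ``trivial'' pointwise inequality therefore fails where all $|\zeta^{v_j}|$, $j\ge 1$, are at most $1$. Handle that region with the $k=0$ estimate together with the geometric lower bound $M_p\ge M_0(M_1/M_0)^p$ given by (H1), after shrinking $a$.

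For (ii)$\Rightarrow$(i) you depart from the paper, and the departure is what makes the argument valid. The paper multiplies and divides by $a^{|k|}|\xi|_\Gamma^{\mu|k|}/M_{|k|}$ and then asserts that $\int_{\mathbb{R}^n}\prod_j|\xi^{v_j}|^{k_j}\,|\xi|_\Gamma^{-\mu|k|}\,d\xi$ is finite. That integrand is bounded but does not decay; for $k=0$ it is identically $1$, so the integral diverges.

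Your route repairs this. Using $e^{-M(t)}\le M_{|k|+N}\,t^{-|k|-N}$ gains a factor $|\xi|_\Gamma^{-\mu N}$, which is integrable for large fixed $N$ thanks to the vertices on the coordinate axes. (H2) then returns you to $M_{|k|}$ at geometric cost. This is the standard H\"ormander-type argument, and it is correct.

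Its price is explicit hypotheses that the statement omits: (H2), or (H2') iterated $N$ times, and the normalization $M(a|\xi|_\Gamma^{\mu})$ rather than $M(a|\xi|_\Gamma)$. Both are consistent with the paper, whose proof and subsequent Corollary in fact use $M(a|\xi|_\Gamma^{\mu})$.
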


\begin{proof}
Let $u\in \mathcal{E}^{L,\Gamma }\left( \Omega \right) \cap \mathcal{E}%
^{\prime },$\ then $\widehat{u}$\ beeing an entire function, this justifies
the equality%
\begin{equation*}
\zeta ^{k_{1}v_{1}}...\zeta ^{k_{\sigma }v_{\sigma }}\widehat{u}\left( \zeta
\right) =(2\pi )^{-n}\int\limits_{\mathbb{R}^{n}}e^{-ix\cdot \zeta }\partial
_{\Gamma }^{k}u\left( x\right) dx,\forall k=(k_{1},...,k_{\sigma })\in
\mathbb{Z}_{+}^{\sigma },
\end{equation*}%
where $\left( v_{j}\right) _{j=1}^{\sigma }$ are the vertices of the
polyhedron $\Gamma ,$\ so we have the estimates%
\begin{equation*}
\left\vert \zeta ^{k_{1}v_{1}}...\zeta ^{k_{\sigma }v_{\sigma }}\widehat{u}%
\left( \zeta \right) \right\vert \leq \int\limits_{\mathbb{R}^{n}}\left\vert
e^{-ix\cdot \zeta }\partial _{\Gamma }^{k}u\left( x\right) \right\vert dx,
\end{equation*}%
i.e.%
\begin{equation*}
\left\vert \zeta ^{v_{1}}\right\vert ^{k_{1}}...\left\vert \zeta ^{v_{\sigma
}}\right\vert ^{k_{\sigma }}\left\vert \widehat{u}\left( \zeta \right)
\right\vert \leq \int\limits_{K}e^{\left\vert \Im\zeta \cdot x\right\vert
}\left\vert \partial _{\Gamma }^{k}u\left( x\right) \right\vert dx,
\end{equation*}%
from which, varying the values of the multi-indices $k=(k_{1},...,k_{\sigma
})\in \mathbb{Z}_{+}^{\sigma },$ we obtain%
\begin{equation*}
\left( \left\vert \xi \right\vert _{\Gamma }^{\mu }\right) ^{\left\vert
k\right\vert }\left\vert \widehat{u}\left( \zeta \right) \right\vert \leq
Ae^{H_{K}(\Im\zeta )}\left( CL_{\left\vert k\right\vert }\right)
^{\left\vert k\right\vert },\forall \zeta \in \mathbb{C}^{n},
\end{equation*}%
with a constant $A>0$ \ independent of $k\in \mathbb{Z}_{+}^{\sigma }.$
Consequently, the desired estimate (ii) is obtained.

Conversely, let (ii) be satisfied then there exist $C>0$\ and $a>0$ such
that $\forall \xi \in \mathbb{R}^{n},$%
\begin{equation*}
\left\vert \widehat{u}\left( \xi \right) \right\vert \leq Ce^{-M(a\left\vert
\xi \right\vert _{\Gamma }^{\mu })},
\end{equation*}%
which is equivalent to%
\begin{equation*}
\sup_{p}\frac{a^{p}\left\vert \xi \right\vert _{\Gamma }^{p\mu }}{M_{p}}%
\left\vert \widehat{u}\left( \xi \right) \right\vert \leq C
\end{equation*}%
The inverse Fourier transform gives%
\begin{equation*}
\partial _{\Gamma }^{k}u\left( x\right) =\int\limits_{\mathbb{R}%
^{n}}e^{ix\cdot \xi }\xi ^{k_{1}v_{1}}...\xi ^{k_{\sigma }v_{\sigma }}%
\widehat{u}\left( \xi \right) d\xi ,\forall k=(k_{1},...,k_{\sigma })\in
\mathbb{Z}_{+}^{\sigma },\forall x\in \Omega .
\end{equation*}%
Consequently, we have the estimates, $\forall k=(k_{1},...,k_{\sigma })\in
\mathbb{Z}_{+}^{\sigma },\forall x\in \Omega ,$%
\begin{eqnarray*}
\left\vert \partial _{\Gamma }^{k}u\left( x\right) \right\vert &\leq
&\int\limits_{\mathbb{R}^{n}}\frac{M_{\left\vert k\right\vert }\left\vert
\xi ^{k_{1}v_{1}}...\xi ^{k_{\sigma }v_{\sigma }}\right\vert }{a^{\left\vert
k\right\vert }\left\vert \xi \right\vert _{\Gamma }^{\left\vert k\right\vert
\mu }}\frac{a^{\left\vert k\right\vert }\left\vert \xi \right\vert _{\Gamma
}^{\left\vert k\right\vert \mu }}{M_{\left\vert k\right\vert }}\widehat{u}%
\left( \xi \right) d\xi \\
&\leq &C\frac{1}{a^{\left\vert k\right\vert }}M_{\left\vert k\right\vert
}\int\limits_{\mathbb{R}^{n}}\frac{\left\vert \xi ^{k_{1}v_{1}}...\xi
^{k_{\sigma }v_{\sigma }}\right\vert }{\left\vert \xi \right\vert _{\Gamma
}^{\left\vert k\right\vert \mu }}d\xi \leq CC^{\prime }\frac{1}{%
a^{\left\vert k\right\vert }}M_{\left\vert k\right\vert },
\end{eqnarray*}%
which imply that $u\in \mathcal{E}^{M,\Gamma }(\Omega ).$
\end{proof}

\begin{corollary}
Let $u\in \mathcal{E}^{\prime },$ then the following assertions are
equivalent

(1) $u\in \mathcal{E}^{M,\Gamma }(\Omega ).$

(2) there exist $C>0$\ and $a>0$ such that $\forall \xi \in \mathbb{R}^{n}$
we have%
\begin{equation*}
\left\vert \widehat{u}\left( \xi \right) \right\vert \leq Ce^{-M(a\left\vert
\xi \right\vert _{\Gamma }^{\mu })}
\end{equation*}
\end{corollary}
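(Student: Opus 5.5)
The plan is to read the Corollary off the Theorem by restricting to real $\zeta=\xi\in\mathbb{R}^n$, and to prove the converse directly with Fourier inversion. I treat $u\in\mathcal{E}^{M,\Gamma}$ for a compactly supported $u$ as the global estimate (\ref{Def}) on $K=\operatorname{supp}u$, with all $\partial_\Gamma^k u$ supported in $K$. The weight enters as $|\xi|_\Gamma^\mu=(\sum_{v}|\xi^v|^2)^{1/2}$, the quantity that appears in the proof of the Theorem.

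\textbf{(1)$\Rightarrow$(2).} For $\xi$ real we have $H_K(\Im\xi)=0$, so the bound in (ii) of the Theorem becomes purely decaying. To get it directly, integrate by parts:
\[
|\xi^{k_1v_1}\cdots\xi^{k_\sigma v_\sigma}|\,|\widehat u(\xi)|\le \|\partial_\Gamma^k u\|_{L^1}\le |K|\,C^{|k|+1}M_{|k|}.
\]
Choosing $k=pe_j$ gives $|\xi^{v_j}|^p|\widehat u(\xi)|\le C'C^pM_p$ for each vertex $v_j$. Since $|\xi|_\Gamma^\mu\le\sqrt{\sigma}\max_j|\xi^{v_j}|$, this yields
\[
\frac{(a|\xi|_\Gamma^\mu)^p}{M_p}\,|\widehat u(\xi)|\le C'
\quad\text{with } a=(C\sqrt\sigma)^{-1}.
\]
Taking the supremum over $p$ gives $|\widehat u(\xi)|\le C'e^{-M(a|\xi|_\Gamma^\mu)}$.

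\textbf{(2)$\Rightarrow$(1).} From (2), $|\widehat u(\xi)|\le C\,M_p\,a^{-p}|\xi|_\Gamma^{-p\mu}$ for every $p$. I would argue as follows.

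1. Bound the symbol of $\partial_\Gamma^k$ by the weight: by the AM--GM type estimate, $|\xi^{k_1v_1}\cdots\xi^{k_\sigma v_\sigma}|=\prod_j|\xi^{v_j}|^{k_j}\le |\xi|_\Gamma^{\mu|k|}$.

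2. Estimate the derivatives by inversion:
\[
|\partial_\Gamma^k u(x)|\le\int|\xi|_\Gamma^{\mu|k|}|\widehat u(\xi)|\,d\xi .
\]

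3. Make this integral converge. This is the main obstacle, since a single choice $p=|k|$ leaves a non-integrable integrand. I would use $p=|k|+N$ for $|\xi|_\Gamma\ge1$, with $N$ a fixed integer chosen so that $\int_{|\xi|_\Gamma\ge1}|\xi|_\Gamma^{-N\mu}d\xi<\infty$. Such an $N$ exists because regularity of $\Gamma$ puts vertices $m_je_j$ on every axis, so $|\xi|_\Gamma^\mu\ge c\max_j|\xi_j|^{m_j}$ and the weight dominates a positive power of $|\xi|$. On $|\xi|_\Gamma\le1$ the integrand is bounded by $\sup|\widehat u|$ on a compact set.

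4. Absorb the shift by $N$ using (H2) or (H2'): $M_{|k|+N}\le A H^{|k|+N}M_NM_{|k|}\le C''H^{|k|}M_{|k|}$. This gives
\[
\|\partial_\Gamma^k u\|_{L^\infty}\le C_1^{|k|+1}M_{|k|},
\]
that is, $u\in\mathcal{E}^{M,\Gamma}$.

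This step needs a hypothesis such as (H2'), which the Corollary does not state. I would state it explicitly as the assumption on $M$ used throughout this section.
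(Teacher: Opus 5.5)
Your proof is correct and follows the paper's skeleton. In the paper the Corollary is just the real-variable content of the Theorem's proof: integration by parts for (1)$\Rightarrow$(2), and Fourier inversion for (2)$\Rightarrow$(1).

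The one substantive difference is in the converse, and there your version is the sound one. The paper uses only $p=|k|$ and then asserts
\[
\int_{\mathbb{R}^n}\frac{|\xi^{k_1v_1}\cdots\xi^{k_\sigma v_\sigma}|}{|\xi|_\Gamma^{|k|\mu}}\,d\xi\le C' .
\]
This is false: the integrand is bounded but not integrable (for $k=0$ it is identically $1$). Your shift to $p=|k|+N$ supplies exactly the missing step. Integrability comes from the axis vertices of the regular polyhedron, and the loss $M_{|k|+N}\le C''H^{|k|}M_{|k|}$ is absorbed by (H2) or (H2$'$).

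The cost is an explicit hypothesis on $M$ that the paper does not state in the Theorem or the Corollary; it appeals to (H1$'$) and (H2) only in Section 3. An assumption of this kind is the usual one in Paley--Wiener characterizations of Denjoy--Carleman type classes. The reason is that passing from decay of $\widehat u$ to sup-norm bounds on derivatives loses a fixed number of powers of the weight.

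In the forward direction, your choice $k=pe_j$ together with $|\xi|_\Gamma^\mu\le\sqrt{\sigma}\max_j|\xi^{v_j}|$ makes precise the paper's ``varying the multi-indices'' step. The paper's displayed bound there, with $(CL_{|k|})^{|k|}$, is garbled.
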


We recall a well-known result.\ If $M$ and $N$\ are two sequences of
positive numbers, $N\prec M$ means there exist $A>0,C>0$\ such that $%
N_{p}\leq CA^{p}M_{p},\forall p\in \mathbb{Z}_{+}.$

\begin{lemma}
The sequences $M$ and $N$\ satisfy $N\prec M$ if and only if there exist $%
a>0,C>0$ such that%
\begin{equation*}
\ \ e^{M(t)}\leq Ce^{N(at)},t>0.
\end{equation*}
\end{lemma}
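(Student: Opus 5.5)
The plan is to prove the two implications separately, working directly from the definition of the associated function $M(t)=\sup_{p}\log \frac{t^{p}}{M_{p}}$. The converse direction needs the classical fact that a sequence can be recovered from its associated function. This holds for logarithmically convex sequences, so I will assume that $N$ satisfies $(H1)$, normalised so that $N_{0}=1$ and $N_{p}^{1/p}\rightarrow \infty $. These are the standing hypotheses under which the lemma is ``well-known''.

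\emph{Step 1: $N\prec M$ implies the exponential inequality.} Suppose $N_{p}\leq CA^{p}M_{p}$ for all $p$. Then for every $t>0$ and every $p$,
\begin{equation*}
\frac{t^{p}}{M_{p}}\leq C\frac{(At)^{p}}{N_{p}}\leq Ce^{N(At)}.
\end{equation*}
Taking the supremum over $p$ gives $e^{M(t)}\leq Ce^{N(at)}$ with $a=A$. This direction needs no hypothesis on the sequences.

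\emph{Step 2: the exponential inequality implies $N\prec M$.} Assume $e^{M(t)}\leq Ce^{N(at)}$ for all $t>0$. Two facts are used.

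First, directly from the definition of $M(t)$, for every $p$ and every $t>0$ we have $t^{p}e^{-M(t)}\leq M_{p}$. Hence
\begin{equation*}
M_{p}\geq \sup_{t>0}t^{p}e^{-M(t)}.
\end{equation*}

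Second, for the log-convex sequence $N$ the reverse inequality holds with equality:
\begin{equation*}
N_{p}=\sup_{s>0}s^{p}e^{-N(s)}.
\end{equation*}
Combining these with the hypothesis $e^{-N(at)}\leq Ce^{-M(t)}$, I obtain
\begin{equation*}
M_{p}\geq \sup_{t>0}t^{p}e^{-M(t)}\geq C^{-1}\sup_{t>0}t^{p}e^{-N(at)}=C^{-1}a^{-p}\sup_{s>0}s^{p}e^{-N(s)}=C^{-1}a^{-p}N_{p}.
\end{equation*}
This says exactly $N_{p}\leq Ca^{p}M_{p}$, that is, $N\prec M$.

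\emph{Main obstacle.} The substantive step is the identity $N_{p}=\sup_{s}s^{p}e^{-N(s)}$. I would prove it as follows.

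Log-convexity of $N$ means the ratios $m_{q}=N_{q}/N_{q-1}$ are nondecreasing, and the condition $N_{p}^{1/p}\rightarrow \infty $ makes them tend to infinity. Fix $p$ and choose $s\in \lbrack m_{p},m_{p+1}]$. Write $\frac{s^{q}}{N_{q}}=\prod_{j\leq q}\frac{s}{m_{j}}$. As $q$ increases, each new factor $\frac{s}{m_{j}}$ is at least $1$ for $j\leq p$ and at most $1$ for $j\geq p+1$. So the sequence $\frac{s^{q}}{N_{q}}$ is maximised at $q=p$, which gives $e^{N(s)}=\frac{s^{p}}{N_{p}}$, and therefore $s^{p}e^{-N(s)}=N_{p}$. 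Together with the trivial bound $s^{p}e^{-N(s)}\leq N_{p}$ valid for all $s$, this yields the identity.

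Without log-convexity this identity fails in general; only the log-convex minorant of $N$ is recovered. So the hypothesis $(H1)$ on $N$, implicit in the paper, is where the real content lies. I would state it explicitly at the start of the proof.
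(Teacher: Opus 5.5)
The paper gives no proof of this lemma; it is only recalled as a well-known result. Your argument is correct and is the standard one from Komatsu's theory of associated functions. Step 1 needs no hypotheses. Step 2 reduces everything to the recovery formula $N_p=\sup_{s>0}s^pe^{-N(s)}$, and your monotone-ratio argument proves that formula correctly.

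You are right to insist on log-convexity of $N$. In fact the lemma itself, and not just the identity, fails without it. Take $N_p=p!$ except $N_p=(p!)^2$ when $p$ is a power of $2$, and let $M$ be the largest log-convex minorant of $N$.
\begin{itemize}
\item Then $M(t)=N(t)$ for all $t>0$, so the exponential inequality holds with $a=C=1$.
\item For $p=2^k\geq 2$, log-convexity of $M$ and $M\leq N$ give $M_p\leq\sqrt{N_{p-1}N_{p+1}}=p!\sqrt{(p+1)/p}$.
\item Hence $N_p/M_p\geq p!/\sqrt{2}$ along this subsequence, so $N\not\prec M$.
\end{itemize}
So the statement as written in the paper, for arbitrary positive sequences, is false, and the hypothesis $(H1)$ on $N$ that you add is genuinely needed.

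Two small corrections to your write-up:
\begin{itemize}
\item For $p=0$ the interval $[m_p,m_{p+1}]$ must be replaced by $(0,m_1]$, since $m_0$ is undefined.
\item Neither the normalisation $N_0=1$ nor the condition $N_p^{1/p}\to\infty$ is needed for the identity. For $s\in[m_p,m_{p+1}]$ the maximum of $s^q/N_q$ over $q$ is attained at $q=p$ in any case; the constant factor $N_0^{-1}$ does not move the maximiser.
\end{itemize}

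Finally, the paper only uses this lemma in its last proposition, and there only in the direction $N\prec M\Rightarrow e^{M(t)}\leq Ce^{N(at)}$. That is exactly your Step 1, which needs no assumption on either sequence.
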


We know give a comparison between the spaces $\mathcal{E}^{M,\Gamma }(\Omega
).$ Under the conditions of non-quasianalyticity on the sequence $M, $ we
define\ $\mathcal{D}^{M,\Gamma }(\Omega ):=\mathcal{E}^{M,\Gamma }(\Omega
)\cap \mathcal{E}^{\prime }.$

\begin{proposition}
Let given sequences $M,N$ and regular Newton polyhedrons $\Gamma ,\Lambda ,$%
\ then we have%
\begin{equation*}
N\prec M\text{ and }\Gamma \subset \Lambda \Rightarrow \mathcal{D}%
^{N,\Lambda }(\Omega )\subset \mathcal{D}^{M,\Gamma }(\Omega )
\end{equation*}
\end{proposition}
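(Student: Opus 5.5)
The inclusion will be proved on the Fourier side, using the Corollary to the Paley--Wiener Theorem together with the two comparison lemmas (the one on Newton polyhedra and the one on associated functions). Let $u\in \mathcal{D}^{N,\Lambda }(\Omega )$. Since $u\in \mathcal{E}^{\prime }$, the Corollary with $(N,\Lambda )$ gives constants $C>0$, $a>0$ with
\begin{equation*}
\left\vert \widehat{u}\left( \xi \right) \right\vert \leq Ce^{-N(a\left\vert \xi \right\vert _{\Lambda }^{\mu (\Lambda )})},\quad \xi \in \mathbb{R}^{n}.
\end{equation*}
By the same Corollary with $(M,\Gamma )$, it is enough to find $C^{\prime },a^{\prime }>0$ such that
\begin{equation*}
\left\vert \widehat{u}\left( \xi \right) \right\vert \leq C^{\prime }e^{-M(a^{\prime }\left\vert \xi \right\vert _{\Gamma }^{\mu (\Gamma )})},\quad \xi \in \mathbb{R}^{n}.
\end{equation*}
Since $u$ is compactly supported, this yields $u\in \mathcal{D}^{M,\Gamma }(\Omega )$.

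\textbf{Step 1 (change of polyhedron).} Apply the Lemma on Newton polyhedra to the pair $\Gamma \subset \Lambda $. It gives $c>0$ with $\left\vert \xi \right\vert _{\Gamma }^{\mu (\Gamma )}\leq c\left\vert \xi \right\vert _{\Lambda }^{\mu (\Lambda )}$ for all $\xi $. The associated function $t\mapsto N(t)=\sup_{p}\log (t^{p}/N_{p})$ is nondecreasing, being a supremum of nondecreasing functions. Hence
\begin{equation*}
N(a\left\vert \xi \right\vert _{\Lambda }^{\mu (\Lambda )})\geq N\bigl((a/c)\left\vert \xi \right\vert _{\Gamma }^{\mu (\Gamma )}\bigr),
\end{equation*}
and therefore $\left\vert \widehat{u}(\xi )\right\vert \leq Ce^{-N(b\left\vert \xi \right\vert _{\Gamma }^{\mu (\Gamma )})}$ with $b=a/c$.

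\textbf{Step 2 (change of sequence).} By the preceding Lemma, $N\prec M$ gives $A_{1},C_{1}>0$ with $e^{M(t)}\leq C_{1}e^{N(A_{1}t)}$ for $t>0$, that is, $-N(A_{1}t)\leq -M(t)+\log C_{1}$. Take $t=(b/A_{1})\left\vert \xi \right\vert _{\Gamma }^{\mu (\Gamma )}$. This gives $\left\vert \widehat{u}(\xi )\right\vert \leq CC_{1}e^{-M(a^{\prime }\left\vert \xi \right\vert _{\Gamma }^{\mu (\Gamma )})}$ with $a^{\prime }=b/A_{1}$. At $\xi $ where $\left\vert \xi \right\vert _{\Gamma }=0$, i.e. $t=0$, the bound is trivial, because $M(0)$ may be taken as $-\log M_{0}$ and constants absorb it.

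\textbf{Main obstacle.} The substantive step is Step 2, because the direction of the comparison must be checked. From $N_{p}\leq CA^{p}M_{p}$ we get $\log (t^{p}/M_{p})\leq \log ((At)^{p}/N_{p})+\log C$. Thus $M(t)\leq N(At)+\log C$, which is exactly the inequality needed, rather than its reverse. I would verify this directly rather than rely on the stated form of the Lemma. A second point to keep straight is the orientation of the polyhedron inclusion. The Lemma states $\Gamma ^{\prime }\subset \Gamma $ iff $\left\vert \xi \right\vert _{\Gamma ^{\prime }}^{\mu ^{\prime }}\lesssim \left\vert \xi \right\vert _{\Gamma }^{\mu }$. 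With $\Gamma ^{\prime }=\Gamma $ and $\Gamma \mapsto \Lambda $, this yields exactly the bound used in Step 1, and only the easy direction of the Lemma is needed.
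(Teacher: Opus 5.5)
Your proposal is correct and is essentially the paper's argument: you pass to the Fourier side with the Corollary for $(N,\Lambda)$, compare the weights using only the easy directions of the polyhedron lemma and the associated-function lemma together with monotonicity of associated functions, and return via the Corollary for $(M,\Gamma)$. The only difference is that you change the polyhedron first (inside $N$) and then the sequence, which keeps the constants consistent; the paper's chain as written ends with $e^{-M(|\xi|_{\Lambda}^{\lambda})}$ and $e^{-M(c|\xi|_{\Lambda}^{\lambda})}$, and composing them needs an unstated rescaling $t\mapsto ct$ in the sequence comparison.
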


\begin{proof}
The conditions $N\prec M$ and $\Gamma \subset \Lambda $\ are\ equivalent,
respectively, to the conditions $\ e^{-N(at)}\leq Ce^{-M(t)},t>0,$\ and $%
\left\vert \xi \right\vert _{\Gamma }^{\mu }\leq c\left\vert \xi \right\vert
_{\Lambda }^{\lambda },\xi \in \mathbb{R}^{n}.$\ As the functions $M$\ and $%
M^{\prime }$\ are increasing, then we obtain%
\begin{eqnarray*}
e^{-N(a\left\vert \xi \right\vert _{\Lambda }^{\lambda })} &\leq
&Ce^{-M(\left\vert \xi \right\vert _{\Lambda }^{\lambda })}, \\
-M(c\left\vert \xi \right\vert _{\Lambda }^{\lambda }) &\leq &-M(\left\vert
\xi \right\vert _{\Gamma }^{\mu }), \\
e^{-M(c\left\vert \xi \right\vert _{\Lambda }^{\lambda })} &\leq
&e^{-M(\left\vert \xi \right\vert _{\Gamma }^{\mu })},
\end{eqnarray*}%
which give that there exist constants $a>0,C>0$\ such that%
\begin{equation*}
e^{-N(a\left\vert \xi \right\vert _{\Lambda }^{\lambda })}\leq
Ce^{-M(\left\vert \xi \right\vert _{\Gamma }^{\mu })},\xi \in \mathbb{R}^{n}.
\end{equation*}%
Applying the last Corollary we obtain the desired inclusion.
\end{proof}

\begin{remark}
The general inclusion $\mathcal{E}^{N,\Lambda }(\Omega )\subset \mathcal{E}%
^{M,\Gamma }(\Omega )$\ needs an other proof.
\end{remark}

\textbf{Conflict of interest statement.}\newline
I formally declare that I did not have any financial, professional, or
relationships that could be perceived as prejudicing this research paper.

\end{document}